\newcommand{\setseparator}{\,:\,}
\newcommand{\converges}[2][\infty]{\underset{{#2}\rightarrow{#1}}{\longrightarrow}}
\newcommand{\differential}[1]{\,\mathrm{d}{#1}}
\newcommand{\inner}[3][\hilbertspace]{\left<{#2},{#3}\right>_{#1}}
\newcommand{\dualmap}[3]{\left({#2},{#3}\right)_{#1}}
\DeclareMathOperator{\vecspan}{span}
\DeclareMathOperator{\supp}{supp}
\DeclarePairedDelimiter\norm{\lVert}{\rVert}
\DeclarePairedDelimiter\modulus{\lvert}{\rvert}
\newcommand{\N}{\mathbb{N}}
\newcommand{\R}{\mathbb{R}}
\newcommand{\leb}[1]{l^{#1}}
\newcommand{\hilbertspace}{\mathcal{H}}
\newcommand{\banachspace}{\mathcal{B}}
\newcommand{\error}{\mathcal{E}}
\newcommand{\thetitle}{When is there a Representer Theorem?\\Reflexive Banach spaces}
\newcommand{\theauthor}{Kevin Schlegel}
\newcommand{\theaddress}{Mathematical Institute\\
University of Oxford\\
Andrew Wiles Building, Radcliffe Observatory Quarter\\
Woodstock Road, Oxford, OX2 6GG, UK}
\newcommand{\theemail}{schlegel@maths.ox.ac.uk}
\newtheoremstyle{theorem}
  {5mm}
  {7mm}
  {\addtolength{\@totalleftmargin}{7mm}
   \addtolength{\linewidth}{-7mm}
   \parshape 1 7mm \linewidth}
  {-7mm}
  {\bfseries}
  {}
  {\newline}
  {\normalfont\textbf{\thmname{#1}\thmnumber{ #2}}\textit{\thmnote{ (#3)}}}
\theoremstyle{theorem}
\newtheorem{theorem}{Theorem}[section]
\crefname{theorem}{theorem}{theorems}
\newaliascnt{proposition}{theorem}
\newtheorem{proposition}[proposition]{Proposition}
\crefname{proposition}{proposition}{propositionss}
\newaliascnt{lemma}{theorem}
\newtheorem{lemma}[lemma]{Lemma}
\crefname{lemma}{lemma}{lemmas}
\newaliascnt{remark}{theorem}
\crefname{remark}{remark}{remarks}
\newaliascnt{corollary}{theorem}
\crefname{corollary}{corollary}{corollaries}
\newaliascnt{definition}{theorem}
\newtheorem{definition}[definition]{Definition}
\crefname{definition}{definition}{definitions}
\newaliascnt{example}{theorem}
\crefname{example}{example}{examples}
\newcommand{\proofend}{\hspace*{\fill}\ding{113}\\}
\let\oldendproof\endproof
\renewcommand{\endproof}{\proofend\oldendproof}
\newtheoremstyle{notation}
  {5mm}
  {5mm}
  {\addtolength{\@totalleftmargin}{7mm}
   \addtolength{\linewidth}{-7mm}
   \parshape 1 7mm \linewidth}
  {-7mm}
  {\bfseries}
  {}
  {\newline}
  {\normalfont\textbf{\thmname{#1}\thmnumber{ #2}:}\thmnote{ (#3)}}
\theoremstyle{notation}
\crefname{notation}{notation}{notations}
\newtheoremstyle{subproof}
  {0mm}
  {5mm}
  {}
  {}
  {\bfseries}
  {}
  {\newline}
  {\normalfont\textit{\textbf{\thmname{#1}\thmnumber{ #2}:}\thmnote{ (#3)}}}
\theoremstyle{subproof}
\newtheorem{subproof}{Part}
\crefname{subproof}{part}{parts}
\renewcommand{\maketitle}{
  \thispagestyle{empty}
  \begin{center}
    {\Large \thetitle \par}
    \vspace{5mm}
    {\large \theauthor \par}
    \vspace{2mm}
    {\small \theaddress \par}
    {\small Email: \textit{\theemail} \par}
    \vspace{3mm}
    \today
    \vspace{8mm}
  \end{center}
}
\begin{document}

\maketitle

\pagestyle{paper}
{\Large\bf Abstract}\\
We consider a general regularised interpolation problem for learning a parameter vector from data. The well known representer theorem says that under certain conditions on the regulariser there exists a solution in the linear span of the data points. This is at the core of kernel methods in machine learning as it makes the problem computationally tractable. Most literature deals only with sufficient conditions for representer theorems in Hilbert spaces. We prove necessary and sufficient conditions for the existence of representer theorems in reflexive Banach spaces and illustrate why in a sense reflexivity is the minimal requirement on the function space. We further show that if the learning relies on the linear representer theorem, then the solution is independent of the regulariser and in fact determined by the function space alone. This in particular shows the value of generalising Hilbert space learning theory to Banach spaces.

{\bf Keywords:} representer theorem, regularised interpolation, regularisation, kernel methods, reproducing kernel Banach spaces
\section{Introduction}
It is a common approach in learning theory to formulate a problem of estimating functions from input and output data as an optimisation problem. Most commonly used is regularisation, in particular \textit{Tikhonov regularisation}  where we consider an optimisation problem of the form
$$\min\left\{\mathcal{E}({(\inner{f}{x_i},y_i)}^m_{i=1}) + \lambda\Omega(f)\setseparator f\in\hilbertspace\right\}$$
where $\hilbertspace$ is a Hilbert space with inner product $\inner{\cdot}{\cdot}$, $\left\{(x_i,y_i) \setseparator i\in\N_m\right\}\subset\hilbertspace\times Y$ is a set of given input/output data with $Y\subseteq\R$, $\mathcal{E}\colon\R^m\times Y^m\rightarrow\R$ is an \textit{error function}, $\Omega\,\colon\hilbertspace\rightarrow\R$ a \textit{regulariser} and $\lambda>0$ is a \textit{regularisation parameter}.\\
The use of a regulariser $\Omega$ is often described as adding additional information or using previous knowledge about the solution to solve an ill-posed problem or to prevent an algorithm from overfitting to the given data. This makes it an important method for learning a function from empirical data from a very large class of functions. Problems of this kind appear widely, in particular in supervised and semisupervised learning, but also in various other disciplines wherever empirical data is produced and has to be explained by a function. This has motivated the study of regularisation problems in mathematics, statistics and computer science, in particular machine learning (Cucker and Smale~\cite{smale2001}, Shawe-Taylor and Cristianini~\cite{shawe-taylor2004}, Micchelli and Pontil~\cite{micchelli2005}).\\
\ \\
It is commonly stated that the regulariser favours certain desirable properties of the solution and can thus intuitively be thought of as picking the function that may explain the data and which is the simplest in some suitable sense. This is in analogy with how a human would pick a function when seeing a plot of the data. One contribution of this work is to clarify this view as we show that if the learning relies on the linear representer theorem the solution is in fact \textit{independent of the regulariser} and it is \textit{the function space} we chose to work in which \textit{determines the solution}.\\
\ \\
Regularisation has been studied in particular in Hilbert spaces as stated above. This has various reasons. First of all the existence of inner products allows for the design of algorithms with very clear geometric intuitions often based on orthogonal projections or the fact that the inner product can be seen as a kind of similarity measure.\\
But in fact crucial for the success of regularisation methods in Hilbert spaces is the well known \textit{representer theorem} which states that for certain regularisers there is always a solution in the linear span of the data points (Kimeldorf and Wahba~\cite{kimeldorf1971}, Cox and O'Sullivan~\cite{cox1990}, Sch\"{o}lkopf and Smola~\cite{smola1998,smola2001}). This means that the problem reduces to finding a function in a finite dimensional subspace of the original function space which is often infinite dimensional. It is this dimension reduction that makes the problem computationally tractable.\\
Another reason for Hilbert space regularisation finding a variety of applications is the \textit{kernel trick} which allows for any algorithm which is formulated in terms of inner products to be modified to yield a new algorithm based on a different symmetric, positive semidefinite kernel leading to learning in \textit{reproducing kernel Hilbert spaces (RKHS)} (Sch\"{o}lkopf and Smola~\cite{smola2002}, Shawe-Taylor and Cristianini~\cite{shawe-taylor2004}). This way nonlinearities can be introduced in the otherwise linear setup. Furthermore kernels can be defined on input sets which a priori do not have a mathematical structure by embedding the set into a Hilbert space.\\
\ \\
The importance and wide success of kernel methods and the representer theorem have led to extensions of the theory to \textit{reproducing kernel Banach spaces (RKBS)} by Zhang, Xu and Zhang~\cite{zhang2009} and representer theorems for learning in RKBS by Zhang and Zhang~\cite{zhang2012}. This motivates the study of the more general regularisation problem
\begin{equation}
  \label{eq:regularisation_problem}
  \min\left\{\mathcal{E}({(L_i(f),y_i)}^m_{i=1}) + \lambda\Omega(f)\setseparator f\in\banachspace\right\}
\end{equation}
where $\banachspace$ is a reflexive Banach space and the $L_i$ are continuous linear functionals on $\banachspace$. We are considering reflexive Banach spaces for two reasons. Firstly they are the fundamental building block of reproducing kernel Banach spaces as can be seen in \cref{sec:examples} where we state the most relevant definitions and results from the work of Zhang, Xu and Zhang~\cite{zhang2009}. Secondly we show in \cref{sec:consequences} that for the setting considered reflexivity is the minimal assumption on the space $\banachspace$ for which our results can hold.\\
\pagebreak
\ \\
Classical statements of the representer theorem give sufficient conditions on the regulariser for the existence of a solution in the linear span of the representers of the data. Argyriou, Micchelli and Pontil~\cite{argyriou2009} gave the, to our knowledge, first attempt in proving necessary conditions to classify all regularisers which admit a linear representer theorem. They prove a necessary and sufficient condition for differentiable regularisers on Hilbert spaces. In the authors earlier work~\cite{schlegel2019} this result was extended part way to not necessarily differentiable regularisers on uniformly convex and uniformly smooth Banach spaces.\\
In this paper we answer the question of existence of representer theorems in a sense completely by extending those results to reflexive Banach spaces and showing optimality of reflexivity. An important consequence of our characterisation of regularisers which admit a linear representer theorem is that one can now prove that in fact the solution does not depend on the regulariser but only on the space the optimisation problem is stated in. This is interesting for two reasons. Firstly it means that we can always pick the regulariser best suited for the application at hand, whether this is computational efficiency or ease of formal calculations. Secondly it further illustrates the importance of being able to learn in a larger variety of spaces, i.e. of extending the learning theory to a variety of Banach spaces.\\
\ \\
In \cref{sec:preliminaries} we will introduce the relevant notation and mathematical background needed for our main results. In particular we will present the relevant results of Argyriou, Micchelli and Pontil~\cite{argyriou2009} which justify focusing on the easier to study regularised interpolation problem rather than the general regularisation problem.\\
Subsequently in \cref{sec:representer_theorem} we will present one of the main results of our work that regularisers which admit a linear representer theorem are \textit{almost radially symmetric} in a way that will be made precise in the statement. We state and prove two lemmas which capture most of the important structure required and then give the proof of the theorem.\\
In \cref{sec:consequences} we discuss the consequences of the theorem from \cref{sec:representer_theorem}. We prove the other main result of the paper, which states that, if we rely on the linear representer theorem for learning, in most cases the \textit{solution is independent of the regulariser} and \textit{depends only on the function space}. We also illustrate why it is clear that we cannot hope to weaken the assumption on the space $\banachspace$ any further than reflexivity.\\
Finally in \cref{sec:examples} we give some examples of spaces to which our results apply. This section is based on the work of Zhang, Xu and Zhang~\cite{zhang2009} and Zhang and Zhang~\cite{zhang2012} on reproducing kernel Banach spaces so we will first be presenting the relevant definitions and results on the construction of RKBS from~\cite{zhang2009} in this section. We then give a few examples which have been presented in these papers.

\section{Preliminaries}\label{sec:preliminaries}
In this section we present the notation and theory used to state and prove our main results. We summarise results which allow us to reduce the problem to study regularised interpolation problems and present the theory of duality mappings required for the proofs of our main results.\\
\ \\
Throughout the paper we use $\N_m$ to denote the set $\{1,\ldots,m\}\subset\N$ and $\R^+$ to denote the non-negative real line $[0,\infty)$.\\
We will assume we have $m$ data points $\left\{(x_i,y_i) \setseparator i\in\N_m\right\}\subset\banachspace\times Y$, where $\banachspace$ will always denote a reflexive real Banach space and $Y\subseteq\mathbb{R}$. Typical examples of $Y$ are finite sets of integers for classification problems, e.g. $\{-1,1\}$ for binary classification, or the whole of $\R$ for regression.

\subsection{Regularised Interpolation}\label{sec:interpolation}
As discussed in the introduction we are interested in problems of the form~(\ref{eq:regularisation_problem}), namely
\begin{equation*}
  \min\left\{\error({(L_i(f),y_i)}^m_{i=1}) + \lambda\Omega(f)\setseparator f\in\banachspace\right\}
\end{equation*}

where $\banachspace$ is a reflexive Banach space. The $L_i$ are continuous linear functionals on $\banachspace$ with the $y_i\in Y\subseteq\R$ the corresponding output data. The functional $\error\,\colon\R^m\times Y^m\rightarrow\R$ is an \textit{error functional}, $\Omega\,\colon\banachspace\rightarrow\R$ a \textit{regulariser} and $\lambda>0$ is a \textit{regularisation parameter}.\\
Argyriou, Micchelli and Pontil~\cite{argyriou2009} show in the Hilbert space case that under very mild conditions this regularisation problem admits a linear representer theorem if and only if the regularised interpolation problem
\begin{equation}
\label{eq:interpolation_problem}
  \min\left\{\Omega(f)\setseparator f\in\banachspace, L_i(f)=y_i\,\forall i\in\N_m\right\}
\end{equation}

admits a linear representer theorem. This is not surprising as the regularisation problem is more general and one obtains a regularised interpolation problem in the limit as the regularisation parameter goes to zero.\\
More precisely they proved the following theorem for the Hilbert space setting. The proof of this theorem for the generality of the setting of this paper is almost identical to the version given in Argyriou, Micchelli and Pontil~\cite{argyriou2009} but requires a few adjustments. The full proof is presented in the appendix.

\begin{theorem}\label{thm:problem_equivalence}
Let $\error$ be a lower semicontinuous error functional which is bounded from below. Assume further that for some $\nu\in\R^m\setminus\{0\}, y\in Y^m$ there exists a unique minimiser $0\neq a_0\in\R$ of $\min\{\error(a\nu,y)\setseparator a\in\R\}$.\\
Assume the regulariser $\Omega$ is  lower semicontinuous and has bounded sublevel sets.\\
Then $\Omega$ is admissible for the regularised interpolation problem (\ref{eq:interpolation_problem}) if the pair $(\error,\Omega)$ is admissible for the regularisation problem (\ref{eq:regularisation_problem}).
\end{theorem}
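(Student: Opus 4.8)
The plan is to realise the regularised interpolation problem \cref{eq:interpolation_problem} as the limit, as $\lambda\downarrow0$, of a suitable family of instances of the regularisation problem \cref{eq:regularisation_problem}, and to carry the linear representer property through this limit. Fix data for an interpolation instance: continuous linear functionals $L_1,\dots,L_m$ and outputs $y_1,\dots,y_m$ whose feasible set $\mathcal A:=\{f\in\banachspace\setseparator L_i(f)=y_i,\ i\in\N_m\}$ is nonempty, pick any $f_0\in\mathcal A$, and write $Z$ for the finite-dimensional subspace in which the linear representer theorem locates solutions for these functionals. Using the hypothesis on $\error$, fix $\nu\in\R^m\setminus\{0\}$, $y\in Y^m$ and the unique nonzero minimiser $a_0$ of $a\mapsto\error(a\nu,y)$, and put $E_0:=\error(a_0\nu,y)$.

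The core of the construction is to replace the $L_i$ by nonzero rescalings/recombinations $M_1,\dots,M_m$, assembled from $\nu$, $a_0$, $y$ and the $y_i$, so that (a) $M_1,\dots,M_m$ determine the same subspace $Z$ as $L_1,\dots,L_m$, and (b) the error term $E(f):=\error\bigl((M_i(f))_{i=1}^m,\,y\bigr)$ satisfies $E\ge E_0$ on all of $\banachspace$, with $E(f)=E_0$ precisely when $f\in\mathcal A$. (On the non-degenerate coordinates one simply takes $M_i=\tfrac{a_0\nu_i}{y_i}L_i$, so that $M_i(f)=a_0\nu_i$ on $\mathcal A$; the coordinates with $\nu_i=0$ or $y_i=0$ must be dealt with separately, and the boundedness below of $\error$ enters here.) For each $n\in\N$, applying the hypothesised linear representer theorem for \cref{eq:regularisation_problem} to the instance with functionals $M_i$, outputs $y$, regulariser $\Omega$ and parameter $\lambda_n:=1/n$ produces a minimiser $f_n\in Z$.

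The passage to the limit is then routine. Comparing $f_n$ with the competitor $f_0$ and using (b) gives $E(f_n)+\lambda_n\Omega(f_n)\le E(f_0)+\lambda_n\Omega(f_0)=E_0+\lambda_n\Omega(f_0)$, and since $E(f_n)\ge E_0$ this yields $\Omega(f_n)\le\Omega(f_0)$. Hence $(f_n)$ lies in a bounded sublevel set of $\Omega$ and, being a bounded sequence in the finite-dimensional space $Z$, has a subsequence $f_{n_k}\to\hat f\in Z$ in norm. On the bounded subsets of $Z$ the lower semicontinuous real-valued function $\Omega$ is bounded below, so the bound $\Omega(f_n)\le\Omega(f_0)$ forces $\lambda_n\Omega(f_n)\to0$, whence $E(f_n)\to E_0$; lower semicontinuity of $\error$ and continuity of each $M_i$ then give $E(\hat f)\le\liminf_k E(f_{n_k})=E_0$, so $E(\hat f)=E_0$ and therefore $\hat f\in\mathcal A$ by (b). Finally lower semicontinuity of $\Omega$ gives $\Omega(\hat f)\le\liminf_k\Omega(f_{n_k})\le\Omega(f_0)$, and since $f_0\in\mathcal A$ was arbitrary, $\hat f$ minimises $\Omega$ over $\mathcal A$. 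Thus $\hat f$ solves \cref{eq:interpolation_problem} and lies in $Z$, which is exactly the linear representer theorem for the interpolation problem.

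The one genuinely delicate step is the construction (a)–(b): arranging the auxiliary functionals and outputs so that the error term is minimised exactly on the interpolation-feasible set, without enlarging $Z$, and in such a way that the minimiser values $\Omega(f_n)$ remain bounded as $\lambda_n\downarrow0$. This is precisely where the full strength of the hypothesis on $\error$ is needed — lower semicontinuity, boundedness below, and the existence of a direction $\nu$ along which $\error(\cdot,y)$ has a unique nonzero minimiser $a_0$ — and where the degenerate coordinates and the behaviour of $\error$ off the line $\R\nu$ have to be handled with care. Compared with Argyriou, Micchelli and Pontil~\cite{argyriou2009} the adjustments amount to replacing their Hilbert-space orthogonal-projection arguments by the weak-compactness and lower-semicontinuity arguments available in a reflexive Banach space; these, together with the remaining bookkeeping, are carried out in the appendix.
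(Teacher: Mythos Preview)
Your sketch carries over the Hilbert-space picture too literally, and the central object ``$Z$'' does not exist in the setting of this paper. In a reflexive Banach space the representer theorem (\cref{def:admissibility}) does \emph{not} assert that the solution lies in a finite-dimensional linear subspace of $\banachspace$; it asserts that some $\hat L=\sum_i c_iL_i$ belongs to $J(f)$. Because the duality mapping $J$ is nonlinear (and, in the generality of the paper, set-valued and discontinuous), the set $\{f\in\banachspace\setseparator J(f)\cap\vecspan\{L_i\}\neq\emptyset\}$ is neither a linear subspace nor, for bounded subsets, norm-compact. So your step ``being a bounded sequence in the finite-dimensional space $Z$, $(f_n)$ has a norm-convergent subsequence with limit in $Z$'' fails on both counts: there is no norm compactness to invoke, and even after passing to a weak limit you would still owe an argument that the condition $J(\cdot)\cap\vecspan\{L_i\}\neq\emptyset$ is preserved.

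Your construction (b) is also not supported by the hypothesis. The assumption gives a \emph{unique} minimiser $a_0$ of $a\mapsto\error(a\nu,y)$ along the line $\R\nu$, but says nothing about $\error(\cdot,y)$ off that line; with $m>1$ independent functionals the vector $(M_i(f))_i$ ranges over all of $\R^m$, so neither $E(f)\ge E_0$ nor ``$E(f)=E_0\Rightarrow f\in\mathcal A$'' follows. The paper avoids both difficulties by a different reduction: it uses a \emph{single} functional $L$ and the error $\error\bigl(\tfrac{a_0}{\norm{L}^2}L(f)\,\nu,\,y\bigr)$, so the argument of $\error$ always lies on $\R\nu$ and the one-dimensional uniqueness is exactly what is needed. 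One then takes a \emph{weakly} convergent subsequence in the reflexive space, checks directly that $\overline cL\in J(\bar f)$ for the weak limit, and concludes not the full interpolation representer theorem for $L_1,\dots,L_m$ but the tangential-monotonicity condition of \cref{lma:tangentially_nondecreasing}; that lemma then yields admissibility for arbitrary $m$. In short, the missing ideas are (i) reduce to $m=1$ so that the error stays on the line $\R\nu$, and (ii) replace the nonexistent finite-dimensional compactness by weak compactness together with an explicit verification that the duality-map condition passes to the weak limit.
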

Note that the assumptions on the error function and regulariser presented here are as in the paper~\cite{argyriou2009}. It is remarked in that paper that other conditions can also be sufficient. The proof of this result follows the earlier mentioned concept that one obtains a regularised interpolation problem as the limit of regularisation problems.\\
It is worth noting that the reverse direction of above theorem does not require any assumptions on the error function or regulariser. In fact we have the following result.
\begin{proposition}
  Let $\error, \Omega$ be an arbitrary error functional and regulariser satisfying the general assumption that minimisers always exist. Then the pair $(\error,\Omega)$ is admissible for the regularisation problem (\ref{eq:regularisation_problem}) if $\Omega$ is admissible for the regularised interpolation problem (\ref{eq:interpolation_problem}).
\end{proposition}
This allows us to focus on the regularised interpolation problem which is a lot easier to study and yet obtain information about the regularisation problem which is more relevant for application. In particular every representer theorem proved below for regularised interpolation is valid for regularisation problems without any restrictions.

\subsection{Duality mappings}
The results in the authors work~\cite{schlegel2019} made clear that the representer theorem is essentially a result about the dual space and the proofs heavily rely on tangents to balls which are exactly described by the duality mapping. Hence to generalise the results from~\cite{schlegel2019} further we need some definitions and results about duality mappings which are given in this section.

\begin{definition}[Duality mapping]
  Let $\mu:\R^+\rightarrow\R^+$ a continuous and strictly increasing function such that $\mu(0)=0$ and $\mu(t)\converges{t}\infty$.\\
  A set-valued map $J:V \rightarrow 2^{V^\ast}$ is called a duality mapping of $V$ into $V^\ast$ with gauge function $\mu$ if $J(0)=\{0\}$ and for $0\neq x\in V$
  $$J(x) = \left\{L\in V^\ast \setseparator L(x) = \norm{L}\cdot\norm{x}, \norm{L}=\mu(\norm{x}) \right\}$$
\end{definition}
In this work we will be considering the case where $\mu$ is the identity and the duality mapping an isometry.\\
The following properties of the duality mapping are well known and can be found e.g.\ in Dragomir~\cite{dragomir2004} and the references therein.

\begin{proposition}\label{prop:dual_map_basics}
  For every $x\in V$ the set $J(x)$ is nonempty, closed and convex. Furthermore we have the following equivalences.
  \begin{enumerate}
  \item $J$ is surjective if and only if $V$ is reflexive.
  \item $J$ is injective if and only if $V$ is strictly convex.
  \item $J$ is univocal if and only if $V$ is smooth.
  \item $J$ is norm-to-weak* continuous exactly at points of smoothness of $V$.
  \end{enumerate}
\end{proposition}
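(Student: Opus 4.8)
The plan is to translate every assertion into a statement about supporting functionals of the unit ball, using the observation that for $x\neq 0$ one has $L\in J(x)$ if and only if $L/\mu(\norm{x})$ is a norm-one functional with $\bigl(L/\mu(\norm{x})\bigr)(x/\norm{x})=1$, i.e.\ a functional supporting the closed unit ball at $x/\norm{x}$. Granting this dictionary the first claim is routine: $J(x)$ is nonempty by Hahn--Banach (take a norm-one support functional $L_0$ at $x$ and scale it by $\mu(\norm{x})$); it is closed as the intersection of the sphere $\{\norm{L}=\mu(\norm{x})\}$ with the norm-closed hyperplane $\{L\colon L(x)=\mu(\norm{x})\norm{x}\}$; and it is convex because any convex combination $L$ of members of $J(x)$ still satisfies $L(x)=\mu(\norm{x})\norm{x}$, which together with $\norm{L}\le\mu(\norm{x})$ and $L(x)\le\norm{L}\norm{x}$ forces $\norm{L}=\mu(\norm{x})$.

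For (1), surjectivity of $J$ is equivalent to every nonzero $L\in V^\ast$ attaining its norm on the unit sphere: given such a point $x_0$ the vector $\mu^{-1}(\norm{L})x_0$ lies in $J^{-1}(L)$, and conversely any $x$ with $L\in J(x)$ gives the norm-attaining point $x/\norm{x}$; by James' theorem this holds for every $L$ exactly when $V$ is reflexive. For (2), read ``injective'' as $J(x)\cap J(y)=\emptyset$ whenever $x\neq y$: if $V$ is strictly convex and $L\in J(x)\cap J(y)$ with $x,y\neq 0$, then $\mu(\norm{x})=\norm{L}=\mu(\norm{y})$ forces $\norm{x}=\norm{y}=:r$ since $\mu$ is strictly increasing, while $L\bigl((x+y)/2\bigr)=\norm{L}r$ forces $\norm{(x+y)/2}=r$ and hence $x=y$; conversely, if $V$ is not strictly convex, pick distinct unit vectors $x,y$ with $\norm{(x+y)/2}=1$, a support functional $L_0$ there, observe $L_0(x)=L_0(y)=1$, and conclude $\mu(1)L_0\in J(x)\cap J(y)$. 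Part (3) is again the dictionary: $J(x)$ is a singleton for every $x\neq 0$ precisely when every unit vector has a unique support functional, i.e.\ precisely when $V$ is smooth.

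For (4) I would first show that $J$ is norm-to-weak$^\ast$ upper semicontinuous at each $x_0\neq 0$. If $x_n\to x_0$ and $L_n\in J(x_n)$, then $\norm{L_n}=\mu(\norm{x_n})\to\mu(\norm{x_0})$, so $(L_n)$ is bounded and has weak$^\ast$ cluster points by Banach--Alaoglu; for any such cluster point $L$, passing to the limit in $L_n(x_0)=L_n(x_n)+L_n(x_0-x_n)$ gives $L(x_0)=\mu(\norm{x_0})\norm{x_0}$, and weak$^\ast$ lower semicontinuity of the norm gives $\norm{L}\le\mu(\norm{x_0})$, so $L\in J(x_0)$. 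When $x_0$ is a smooth point, $J(x_0)$ is a singleton by (3), so every selection $L_n\in J(x_n)$ converges weak$^\ast$ to that single value, which is exactly norm-to-weak$^\ast$ continuity at $x_0$; when $x_0$ is not smooth, $J(x_0)$ is not a singleton, so $J$ cannot be a continuous single-valued map at $x_0$.

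The only genuinely deep ingredient is James' theorem in part (1); everything else reduces to Hahn--Banach together with elementary convexity bookkeeping. The one place that needs care is part (4), where the compactness argument should be phrased with nets (or specialised to the separable case) so that weak$^\ast$ cluster points are actually available; with that understood, the statement is entirely standard, which is why it is quoted from Dragomir~\cite{dragomir2004} rather than proved in detail here.
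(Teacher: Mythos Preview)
The paper does not prove this proposition at all: it is stated as a list of well-known facts with a pointer to Dragomir~\cite{dragomir2004} and the references therein, and no argument is given. Your proposal therefore supplies strictly more than the paper does, and what you supply is the standard argument one would find in such references: the reduction to support functionals of the unit ball, Hahn--Banach for nonemptiness, James' theorem for the reflexivity equivalence, the usual strict-convexity midpoint argument for injectivity, the definitional identification of univocality with smoothness, and the Banach--Alaoglu compactness argument for upper semicontinuity. Your self-aware final remark that this is ``quoted from Dragomir rather than proved in detail here'' is exactly right.

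Two small points worth tightening. First, in the closedness claim you describe $J(x)$ as the intersection of the sphere $\{\norm{L}=\mu(\norm{x})\}$ with a hyperplane; this is correct for closedness, but it is cleaner (and avoids any worry about the sphere) to note that $J(x)=\{L:\norm{L}\le\mu(\norm{x})\}\cap\{L:L(x)=\mu(\norm{x})\norm{x}\}$, an intersection of a closed ball with a closed hyperplane, which also makes weak$^\ast$-closedness immediate. Second, in part~(4) your ``$J$ cannot be a continuous single-valued map at $x_0$'' is the right reading of the informal statement, but note that your own argument shows $J$ is \emph{always} norm-to-weak$^\ast$ upper semicontinuous as a set-valued map; what fails at a non-smooth point is single-valuedness, hence continuity as a function. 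You flag the nets-versus-sequences issue correctly.
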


The following generalised version of the Beurling-Livingston theorem is essential for the proof of our main result. A proof of this theorem can be found in the work by Browder~\cite{browder1965} which is very general, deducing the result from a result on multi-valued monotone nonlinear mappings. A more direct proof, giving the interested reader a better idea of the objects occurring in the result, can be found in the work by Bla\u{z}ek~\cite{blazek1982}. Unfortunately there is an issue in the proof in the paper by Bla\u{z}ek, we present a corrected version of it in the appendix of this paper. The overall intuition of Bla\u{z}eks proof is correct nonetheless and a moral summary of it can also be found in a paper by Asplund~\cite{asplund1967}.

\begin{theorem}[Beurling-Livingston]
\label{thm:duality_map_intersection}
Let $V$ be a real normed linear space with duality mapping $J$ with gauge function $\mu$ and $W$ a reflexive subspace of $V$.\\
Then for any fixed $x_0\in V, L_0\in V^\ast$ there exists $z\in W$ such that
  $$J(x_0 + z) \cap (W^\perp - L_0) \neq \emptyset$$
  where $W^\perp$ denotes the annihilator of $W$ in $V^\ast$.
\end{theorem}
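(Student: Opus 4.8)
The plan is to realise $J(x_0+z)\cap(W^\circ-u_0^\ast)$ as the first–order optimality set of a convex minimisation problem over $W$, and then invoke reflexivity of $W$ to guarantee that a minimiser exists. Introduce the convex potential $\phi\colon V\to\R$, $\phi(x)=\int_0^{\norm{x}}\mu(s)\differential{s}$. Since $\mu$ is continuous, strictly increasing, nonnegative and vanishes at $0$, the function $\phi$ is finite, continuous and convex, and a standard subdifferential chain rule (see e.g.\ Dragomir~\cite{dragomir2004}) gives $\partial\phi(x)=J(x)$ for all $x\in V$, with $\partial\phi(0)=\{0\}$. The role of $\phi$ is to turn the condition ``$L\in J(x_0+z)$'' into the subgradient condition ``$L\in\partial\phi(x_0+z)$'', which is precisely what a minimiser delivers.

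Next I would consider $\psi\colon W\to\R$, $\psi(z)=\phi(x_0+z)+u_0^\ast(z)$; it is convex and norm–continuous, hence weakly lower semicontinuous on $W$. The essential quantitative point is superlinear growth of $\phi$: for $\norm{z}=r>\norm{x_0}$ one has $\phi(x_0+z)\ge\int_0^{r-\norm{x_0}}\mu(s)\differential{s}\ge\tfrac{r-\norm{x_0}}{2}\,\mu\!\left(\tfrac{r-\norm{x_0}}{2}\right)$, and because $\mu(t)\to\infty$ this eventually dominates the linear lower bound $u_0^\ast(z)\ge-\norm{u_0^\ast}\,\norm{z}$. Hence $\psi(z)\to\infty$ as $\norm{z}\to\infty$, so the sublevel sets of $\psi$ are bounded. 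Since $W$ is reflexive, these bounded closed convex sets are weakly compact, so the weakly lower semicontinuous functional $\psi$ attains its infimum at some $z\in W$ (equivalently, a minimising sequence is bounded by coercivity and has a weakly convergent subsequence).

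Finally I would extract the conclusion from optimality of $z$. For every $w\in W$ and $t>0$ we have $\psi(z+tw)\ge\psi(z)$; dividing by $t$ and letting $t\downarrow0$ yields $\phi'(x_0+z;w)\ge-u_0^\ast(w)$ for all $w\in W$, where $\phi'(x_0+z;\cdot)$ is the sublinear directional derivative of $\phi$ at $x_0+z$. Thus the linear functional $-u_0^\ast|_W$ is dominated on $W$ by the sublinear functional $\phi'(x_0+z;\cdot)$. As $\phi$ is continuous and convex it is locally Lipschitz, so $\phi'(x_0+z;\cdot)$ is a bounded sublinear functional, and Hahn--Banach produces $L\in V^\ast$ with $L|_W=-u_0^\ast|_W$ and $L(v)\le\phi'(x_0+z;v)$ for all $v\in V$, i.e.\ $L\in\partial\phi(x_0+z)=J(x_0+z)$. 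Since $L|_W=-u_0^\ast|_W$ means $L+u_0^\ast\in W^\circ$, we obtain $L\in J(x_0+z)\cap(W^\circ-u_0^\ast)$, which is the claim.

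I expect the existence of the minimiser to be the crux: it is the only step that uses reflexivity of $W$, and it is exactly where the statement would fail for non-reflexive $W$, since one needs weak compactness of the sublevel sets and it is the growth condition $\mu(t)\to\infty$ that forces those sets to be bounded. The identity $\partial\phi=J$ and the closing Hahn--Banach argument are essentially bookkeeping, although one should check with some care that the directional derivative of $\phi$ is a genuinely bounded sublinear functional, so that the extended functional $L$ lies in $V^\ast$ and not merely in the algebraic dual.
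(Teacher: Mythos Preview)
Your proposal is correct and follows essentially the same strategy as the paper: introduce the convex potential $\phi$ with $\partial\phi=J$, minimise the tilted functional over the reflexive subspace $W$ using coercivity, and extend the first-order optimality condition to all of $V$ via Hahn--Banach. Your execution of the last step---dominating $-u_0^\ast|_W$ by the sublinear directional derivative $\phi'(x_0+z;\cdot)$ and extending in one shot---is in fact cleaner than the paper's, which proceeds through a case analysis on whether $x_0\in W$, first extends explicitly to $\overline{W}=W+\operatorname{span}\{x_0\}$ with a hand-built value at $x_0$, checks the norm is preserved, and only then invokes Hahn--Banach.
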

\vspace*{-10mm}
\section{Existence of Representer Theorems}\label{sec:representer_theorem}
We are now in the position to present the first of the main results of this paper. Throughout this section $\banachspace$ will denote a reflexive Banach space with dual space $\banachspace^\ast$ and duality mapping
$$J(x) = \left\{L\in\banachspace^\ast \setseparator L(x)=\norm{L}\cdot\norm{x}, \norm{L}=\norm{x}\right\}$$
It is well known that this mapping is surjective for reflexive Banach spaces but need not be injective or univocal. It is injective if and only if the space is strictly convex and univocal if and only if the space is smooth.\\
As argued in \cref{sec:interpolation}, rather than studying the general regularisation problem~(\ref{eq:regularisation_problem}) we can consider the regularised interpolation problem~(\ref{eq:interpolation_problem}), namely
\begin{equation*}
  \min\left\{\Omega(f) \setseparator f\in\banachspace,\, L_i(f)=y_i\,\forall i\in\N_m\right\}
\end{equation*}
Note that the $L_i$ could be point evaluations $L_i(f)=f(x_i)$, in which case the problem reduces to usual function evaluations at data points $x_i$, but the framework also allows for other linear functionals such as e.g.\ local averages of the form $L(f) = \int_{\banachspace}f(x)\differential{P(x)}$ where $P$ is a probability measure on $\banachspace$.\\
Our goal is to classify all regularisers for which there exists a linear representer theorem. As stated in the authors earlier work~\cite{schlegel2019}, both our work as well as previous work by Micchelli and Pontil~\cite{miccelli2004} indicate that the representer theorem in its core is actually a result about the dual space. This does not become apparent in its classical form as in a Hilbert space the dual element is the element itself. Thus in the Banach space setting considered in this work we formulate the representer theorem in terms of dual elements of the data, as done in~\cite{schlegel2019}. In this paper the space might not be smooth so we also need to account for the dual map potentially not being univocal. We call regularisers which always allow a solution which has a dual element in the linear span of the linear functionals defining the problem admissible, as is made precise in the following definition.
\begin{definition}[Admissible Regularizer]\label{def:admissibility}
  We say a function $\Omega:\banachspace\rightarrow\R$ is admissible if for any $m\in\N$ and any given data $\{L_1,\ldots,L_m\}\subset\banachspace^\ast$ and $\{y_1,\ldots,y_m\}\subset Y$ such that the interpolation constraints can be satisfied the regularised interpolation problem \cref{eq:interpolation_problem} admits a solution $f$ such that there exist coefficients $\{c_1,\ldots,c_m\}\subset\R$ such that
  $$\hat{L}=\sum\limits_{i=1}^m c_i L_i \in J(f)$$
\end{definition}

It is well known that being a non-decreasing function of the norm on a Hilbert space is a sufficient condition for the regulariser to be admissible. By a Hahn-Banach argument similar as e.g.\ in Zhang, Zhang~\cite{zhang2012} this generalises to this notion of admissibility for reflexive Banach spaces.\\
We want to show that an admissible regulariser is in a sense almost radially symmetric, similar to our previous work~\cite{schlegel2019}. The proof strategy is similar to the one in~\cite{schlegel2019} but it will turn out that in particular a lack of strict convexity makes the situation a lot more delicate to deal with. We will begin by showing that admissible regularisers are still nondecreasing along tangents in Banach spaces which are not strictly convex, but in a weaker form than for uniform Banach spaces. Subsequently we will explore to what extend this weaker tangential bound still implies radial symmetry.

\begin{lemma}
  \label{lma:tangential_bound}
  A function $\Omega\,\colon\banachspace\rightarrow\R$ is admissible if and only if for every exposed face of the ball $\Omega$ attains its minimum in at least one point, and for every $f$ in the face where the minimum is attained and every $L\in J(f)$ exposing the face and every $f_T\in\ker(L)$ we have
  $$\Omega(f+f_T) \geq \Omega(f)$$
\end{lemma}
\vspace*{-10mm}
\begin{definition}
  We are going to refer to the points that \cref{lma:tangential_bound} applies to as \textit{admissible points}.
\end{definition}
Note that this in particular means that every exposed point is admissible and the bound applies to every functional exposing it. Further, if the point is rotund then the lemma applies to every functional attaining its norm at the point.\\

\begin{proof}[Of~\cref{lma:tangential_bound}]
  \begin{subproof}[$\Omega$ admissible $\Rightarrow$ nondecreasing along tangential directions]
    \label{spf:tangential_bound_part1}
    Fix any $f\in\banachspace$ and consider, for $L\in J(f)$ arbitrary but fixed, the regularised interpolation problem
    $$\min\left\{\Omega(g) \setseparator g\in\banachspace, L(g)=L(f)=\norm{f}^2\right\}$$
    Since $\Omega$ is admissible there exists a solution $f_0$ such that $c\cdot L\in J(f_0)$. Now if there does not exist $g\in\banachspace$ such that $g\neq f$ and $L\in J(g)$ then this can only be $f$ itself, as in the case of uniform Banach spaces~\cite{schlegel2019}. Thus for any $f_T\in\ker(L)$ also $L(f+f_T)=L(f)=\norm{f}^2$ and $f+f_T$ also satisfies the constraints and hence necessarily $\Omega(f+f_T) \geq \Omega(f)$.\\
    But if there exists $g\in\banachspace$ such that $L\in J(g)$ we have no way of making a statement about how $\Omega(f)$ and $\Omega(g)$ compare. All we can say is that in this face containing $f$ and $g$ there is at least one point where the minimum of $\Omega$ is attained. It is clear that for any of those minimal points the above discussion is true for $L$ exposing the face so that we obtain the claimed tangential bound.
  \end{subproof}

  \begin{subproof}[Nondecreasing along tangential directions $\Rightarrow$ $\Omega$ admissible]
    \label{spf:tangential_to_admissible_reflexive}
    Conversely fix any data $(L_i,y_i)\in\banachspace^\ast\times Y$ for $i\in\N_m$ such that the constraints can be satisfied. Let $f_0$ be a solution to the regularised interpolation problem. If $\vecspan\{L_i\}\cap J(f_0) \neq \emptyset$ we are done, so assume not. We let
  $$Z = \left\{f_T\in\banachspace \setseparator L_i(f_T)=0\,\forall i\in\N_m\right\} = \bigcap\limits_{i\in\N_m}\ker{L_i}$$
  We want to show that there exists $f_T\in Z$ such that $\vecspan\{L_i\}\cap J(f_0+f_T) \neq \emptyset$.\\
  To see that this is true choose $V=\banachspace$ and $W=Z$ in the Beurling-Livingston theorem~(\cref{thm:duality_map_intersection}). Since $Z$ is a closed subspace of a reflexive space it is itself reflexive. Further choose $x_0 = f_0$ and $L_0=0$. Then the theorem says that there exists $f_T\in Z$ such that
  $$J(f_0+f_T) \cap (Z^\perp + 0) \neq \emptyset$$
  But $Z=\{L_i\}_\perp$ and so $Z^\perp = \vecspan\{L_i\}$. Thus there exists $\hat{f}=f_0+f_T$ which satisfies the interpolation constraints and such that
  $$J(\hat{f})\cap\vecspan\{L_i\} \neq \emptyset$$
  Further for $\hat{L}\in J(\hat{f})\cap Z^\perp$ we have $-f_T\in\ker(\hat{L})$. If $f_0+f_T$ is exposed by $\hat{L}$ then the tangential bound applies and
  $$\Omega(\hat{f}) = \Omega(f_0+f_T) \leq \Omega((f_0 + f_T)  + (-f_T)) = \Omega(f_0)$$
  so $\hat{f}$ is a solution of the regularised interpolation problem.\\
  If on the  other hand $f_0+f_T$ is not exposed by $\hat{L}$, then it is contained in a face exposed by $\hat{L}$. But then for any $\overline{f_T}\in\banachspace$ such that $\hat{f}+\overline{f_T}$ is still contained in this face we have that $\hat{L}\in J(f_0+f_T+\overline{f_T})$ and $\overline{f_T}\in\ker(\hat{L})$ so that $f_0+f_T+\overline{f_T}$ satisfies the interpolation constraints. We can thus choose $\overline{f_T}$ such that $f_0+f_T+\overline{f_T}$ is a minimum of $\Omega$ in the face and the tangential bound hence applies to it. Thus similarly to before
  $$\Omega(f_0+f_T+\overline{f_T}) \leq \Omega((f_0 + f_T+\overline{f_T})  + (-f_T-\overline{f_T})) = \Omega(f_0)$$
  and $f_0+f_T+\overline{f_T}$ is a solution of the regularised interpolation problem of the desired form.
  \end{subproof}
\end{proof}
This illustrates why strict convexity is the crucial property determining the type of result we can obtain. If the space is strictly convex then every point is rotund and thus exposed. This means every point is admissible and we are in a situation similar to before. We are thus first going to discuss this case, before looking at what can be said when the space is not strictly convex.

\subsection{Strictly Convex Spaces}\label{sec:strictly-convex}
Since in a strictly convex space every point is exposed, every point is admissible and the tangential bound from \cref{lma:tangential_bound} applies everywhere. We thus are able to obtain results in exactly the spirit of our previous work~\cite{schlegel2019}.

\begin{lemma}
  \label{lma:circular_bound}
  If for every $f\in\banachspace$ and all $f_T\in\bigcup\limits_{L\in J(f)}\ker(L)$ we have $\Omega(f) \leq \Omega(f+f_T)$ then for any fixed $\hat{f}\in\banachspace$ we have that
  $$\Omega(\hat{f})\leq\Omega(f)$$
  for all $f\in\banachspace$ such that $\norm{\hat{f}} < \norm{f}$.
\end{lemma}

\begin{proof}
  Since the space is assumed to be strictly convex every point is exposed. The space may not be smooth in which case the duality mapping $J$ is not univocal but for a non-smooth, rotund point $f$ every $L\in J(f)$ exposes it. Thus \cref{lma:tangential_bound} applies to all points $f\in\banachspace$ and all functionals  $L\in J(f)$. We thus do not need to worry about whether or not a point is an exposed point and whether it is exposed by a given functional attaining its norm at the point. This means we can follow the same general idea of argumentation as we did in our previous work~\cite{schlegel2019}.\\
  \begin{subproof}[Bound $\Omega$ on the half spaces given by the tangent planes through $\hat{f}$]
    We start by showing that $\Omega$ is radially nondecreasing by moving out along a tangent and back along another tangent to hit any point along the ray $\lambda\cdot\hat{f}$ for $\lambda>1$. Via the tangents at those points this again immediately gives the bound for all half spaces spanned by a tangent plane through $\hat{f}$ given by some $L\in J(\hat{f})$, which might be more than one with $J$ possibly not being univocal. This is illustrated in \cref{fig:radially_increasing}.\\
    \begin{figure}[h]
      \begin{subfigure}[t]{.5\textwidth}
        \centering
        \includegraphics[width=.95\linewidth]{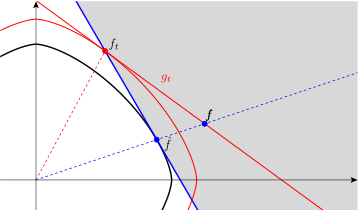}
        \caption{\footnotesize We can extend the tangential bound to the ray $\lambda\cdot \hat{f}$ by finding the point $f_t$ along the tangent from where the tangent to $f_t$ hits the desired point $f=\lambda\cdot\hat{f}$ on the ray. Via the tangents to points along the ray the bound then extends to the shaded half space.}\label{fig:radially_increasing}
      \end{subfigure}
      \begin{subfigure}[t]{.5\textwidth}
        \centering
        \includegraphics[width=.95\linewidth]{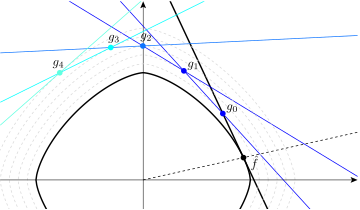}
        \caption{\footnotesize By repeatedly taking small steps along tangents we can move all the way around the circle.}\label{fig:radiating_bound}
      \end{subfigure}
      \caption{Extending the tangential bound to all points outside the circle.}
    \end{figure}\\
    \noindent
    We fix some $\hat{f}\in\banachspace$ and $\lambda>1$ and set $f=\lambda\cdot\hat{f}$. To show that $\Omega(f) \geq \Omega(\hat{f})$ fix any $L_1\in J(\hat{f})$ and $f_{T}\in\ker\{L_1\}$ and set
    \begin{gather*}
      f_t = \hat{f} + t\cdot f_{T}\\
      g_t = f - f_t = (\lambda - 1)\cdot\hat{f} - t\cdot f_{T}
    \end{gather*}
    so that $f_t + g_t = f$. By the choice of $f_{T}$ we have that $\Omega(\hat{f}) \leq \Omega(f_t)$ and we now need to show that there exists $t_0$ such that there exists $L_{t_0}\in J(f_{t_0})$ such that $g_{t_0}\in\ker\{L_{t_0}\}$. This would mean that $\Omega(f_{t_0})\leq\Omega(f_{t_0}+g_{t_0})=\Omega(f)$ as claimed.\\
    \ \\
    To show that such $t_0$ indeed exists we will consider choices $L_t\in J(f_t)$ for every $t$. Note first that by definition of $g_{t}$
    \begin{align}
      L_{t}(g_{t}) = 0 & \Leftrightarrow (\lambda - 1) L_{t}(\hat{f}) = tL_{t}(f_{T}) \notag\\
                           & \Leftrightarrow \lambda L_{t}(\hat{f}) = L_{t}(\hat{f}) + tL_{t}(f_{T}) = L_{t}(f_{t}) \notag\\
      \label{eq:equiv_condition} & \Leftrightarrow \lambda L_{t}(\hat{f}) = \norm{f_{t}}^2
    \end{align}
    which gives us an equivalent condition to find a suitable $t_0$.\\
    We now define the set-valued function $F\,\colon [0,\infty)\rightarrow\mathcal{P}(\R)$
    $$F(t) = \{\lambda L_t(\hat{f})\in\R \setseparator L_t\in J(f_t)\}$$
    By \cref{prop:dual_map_basics} $J(f)$ is non-empty, weakly* closed and convex for every $f\in\banachspace$ so the value of $F(t)$ is either a single value or an interval in $\R$.\\
    It is known that if $\banachspace$ is smooth then $J$ is univocal and norm-to-weak* continuous so that $F$ is clearly continuous. We show that if $\banachspace$ is not smooth the function $F$ is still almost continuous in the sense that in any jump the function is interval valued and the interval connects both ends of the jump. To show this fix an arbitrary $t\in[0,\infty)$ and let $s\rightarrow t$. Then $f_s\rightarrow f_t$ in norm and hence for any choice of $L_s\in J(f_s)$ we have that $\norm{L_s}=\norm{f_s}\leq M$ for some constant $M$. Thus passing to a subsequence if necessary $L_s\overset{\ast}{\rightharpoonup} \widetilde{L}$, in particular $L_s(\hat{f})\converges[t]{s}\widetilde{L}(\hat{f})$.\\
    We want to show that this $\widetilde{L}$ is indeed contained in $J(f_t)$. By standard results (c.f. Brezis~\cite{brezis2011} Proposition 3.13 (iv)) we know that $L_s(f_s) \rightarrow \widetilde{L}(f_t)$ but also $L_s(f_s)=\norm{f_s}^2\rightarrow\norm{f_t}^2$ so that
    \begin{equation}
      \label{eq:norm_ft_sq}
      \widetilde{L}(f_t)=\norm{f_t}^2
    \end{equation}
    Further $\norm{\widetilde{L}} \leq \liminf\norm{L_s}=\norm{f_t}$ (c.f. Brezis~\cite{brezis2011} Proposition 3.13 (iii)) and thus
    $$\norm{\widetilde{L}}\cdot\norm{f_t} \leq \liminf\norm{L_s}\cdot\lim\norm{f_s} \leq \lim L_s(f_s) = \widetilde{L}(f_t) \leq \norm{\widetilde{L}}\cdot\norm{f_t}$$
    which means that
    \begin{equation}
      \label{eq:norm_L_ft}
      \widetilde{L}(f_t) = \norm{\widetilde{L}}\cdot\norm{f_t}
    \end{equation}
    Putting \cref{eq:norm_ft_sq} and \cref{eq:norm_L_ft} together gives
    $$\norm{f_t}^2=\widetilde{L}(f_t)=\norm{\widetilde{L}}\cdot\norm{f_t}$$
    which shows that indeed $\norm{\widetilde{L}} = \norm{f_t}$ and hence $\widetilde{L}\in J(f_t)$.\\
    But this means that for $s\rightarrow t$ and any choice of $F(s)$ where $F$ is not single valued there exists $x\in F(t)$ such that $F(s)\rightarrow x$. This proves the claim that $F$ is ``effectively continuous'', in the sense that whenever the function would have a jump it is set valued and its interval value closes the gap between either side of the jump. This means that an intermediate value theorem holds for the function $F$.\\
    Going back to \cref{eq:equiv_condition} we see that it is satisfied if and only if $\norm{f_{t_0}}^2\in F(t_0)$. For $t=0$, i.e.\  $f_0=\hat{f}$, we have
    $$F(0) = \lambda L_0(\hat{f}) = \lambda\norm{\hat{f}}^2 > \norm{\hat{f}}^2 = \norm{f_0}^2$$
    On the other hand
    $$F(t) = \lambda L_t(\hat{f}) \leq \lambda\norm{L_t}\cdot\norm{\hat{f}} = \lambda\norm{f_t}\cdot\norm{\hat{f}}$$
    But since $\norm{f_t}\converges{t}\infty$ we have $\lambda\norm{\hat{f}} < \norm{f_t}$ for $t$ large enough and thus
    $$F(t) = \lambda L_t(\hat{f}) \leq \lambda \norm{f_t}\cdot\norm{\hat{f} } < \norm{f_t}^2$$
    for large $t$. Since $\norm{f_t}^2$ is continuous in $t$ and the intermediate value theorem holds for $F$ this means that there exists a $t_0$ such that $\norm{f_{t_0}}^2 \in F(t_0)$ which means that there exists $L_{t_0}\in J(f_{t_0})$ such that \cref{eq:equiv_condition} is satisfied. For this $t_0$ indeed
    $$\Omega(\hat{f}) \leq \Omega(f_{t_0}) \leq \Omega(f_{t_0} + g_{t_0}) = \Omega(f)$$
  \end{subproof}

  \begin{subproof}[Extend the bound around the circle]
    The fact that we can extend the bound around the circle is clear by the same argument as in our previous work~\cite{schlegel2019}. The idea is that we can repeatedly move along tangents around the circle without moving to far away from it, as illustrated in \cref{fig:radiating_bound}.
    For points of smoothness of the norm we already showed in~\cite{schlegel2019} that if we take small enough steps along tangents we can get all the way around the circle without getting too far away from it. In points of non-smoothness we have more than one tangent to the ball. But as the tangential bound on $\Omega$ holds for every tangent it is obviously always possible to choose a tangent which stays arbitrary close to the circle.
  \end{subproof}
\end{proof}
Seeing that this result is effectively the same as what we proved for uniform Banach spaces in~\cite{schlegel2019} it is not surprising that the main result describing admissible regularisers for strictly convex Banach spaces is the same as for uniform Banach spaces. We can obtain the same closed form characterisation as before, saying that admissible regularisers are almost radially symmetric.

\begin{theorem}
\label{thm:almost_radially_symmetric_reflexive}
  A function $\Omega\,\colon\banachspace\rightarrow\R$ is admissible if and only if it is of the form
  $$\Omega(f) = h(\norm{f}_\banachspace)$$
  for some nondecreasing $h\,\colon [0,\infty)\rightarrow\R$ whenever $\norm{f}_\banachspace\neq r$ for $r\in\mathcal{R}$. Here $\mathcal{R}$ is an at most countable set of radii where $h$ has a jump discontinuity. For any $f$ with $\norm{f}_\banachspace=r\in\mathcal{R}$ the value $\Omega(f)$ is only constrained by the monotonicity property, i.e.\ it has to lie in between $\lim\limits_{t\nearrow r}h(t)$ and $\lim\limits_{t\searrow r}h(t)$.
\end{theorem}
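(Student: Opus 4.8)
The plan is to prove the two implications separately, in each case replacing admissibility by an equivalent geometric statement.

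\textbf{Reduction.} The first step is to show that $\Omega$ is admissible if and only if for every interpolation‑feasible affine set $A=\{f\in\banachspace:L_i(f)=y_i\}$ the minimum of $\Omega$ over $A$ is attained at a point of minimal norm in $A$. Indeed, if $f$ solves \cref{eq:interpolation_problem} with $\hat L=\sum_i c_iL_i\in J(f)$ and $g$ is another feasible point, then $g-f\in\bigcap_i\ker L_i\subseteq\ker\hat L$, so $\hat L(g)=\hat L(f)=\norm{\hat L}\norm{f}=\norm{f}^2$ while $\hat L(g)\le\norm{\hat L}\norm{g}=\norm{f}\norm{g}$, forcing $\norm{f}\le\norm{g}$; thus any solution carrying a representer has minimal norm. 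Conversely, for the minimal‑norm point $f^\ast$ of $A$ the normal‑cone optimality condition for $\min_A\norm{\cdot}$, together with $\bigl(\bigcap_i\ker L_i\bigr)^{\circ}=\vecspan\{L_i\}$, produces an element of $J(f^\ast)$ inside $\vecspan\{L_i\}$, i.e.\ a representer. So admissibility is equivalent to $\min_A\Omega=\min_{F_A}\Omega$ for all such $A$, where $F_A$ is the convex face of minimal‑norm points of $A$.

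\textbf{Necessity.} Working from this geometric condition, I would first prove a monotonicity lemma: for $y\in\banachspace$ and $r<\norm{y}$ choose, using connectedness of the dual unit sphere, a norm‑one $L$ with $L(y)=r$; the hyperplane $A=\{L=r\}$ has $F_A\subseteq\partial B_r$ and contains $y$, so $\inf_{\norm{f}=r}\Omega(f)\le\Omega(y)$, whence $h(r):=\inf_{\norm{f}=r}\Omega(f)$ is non‑decreasing. This $h$ is the function of the statement and $\mathcal R$ will be contained in its at most countable set of jump radii. The second lemma is the segment lemma: if $\norm{f_1}=\norm{f_2}$ and $[f_1,f_2]\subseteq\partial B_{\norm{f_1}}$, then Hahn–Banach at the midpoint gives a norm‑one $L$ with $L(f_1)=L(f_2)=\norm{f_1}$, placing both in the minimal‑norm face of $\{L=\norm{f_1}\}$; feeding into the geometric condition suitable hyperplanes that expose $f_1$ (resp.\ $f_2$) inside this face, and invoking \cref{thm:duality_map_intersection} to realise the minimal‑norm representers required, one forces $\Omega(f_1)=\Omega(f_2)$, which is the ``constant along straight lines within $\partial B_r$'' clause. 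Combining the lemmas, $\Omega$ equals $h(\norm{\cdot})$ off the jump set of $h$, and on the exceptional radii it is pinned only by monotonicity and by constancy along sphere segments.

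\textbf{Sufficiency.} Conversely, suppose $\Omega(f)=h(\norm{f})$ with $h$ non‑decreasing off $\mathcal R$ and with the stated behaviour on $\mathcal R$. Given data with a feasible $f_0$, put $W=\bigcap_i\ker L_i$; being a closed subspace of the reflexive space $\banachspace$ it is reflexive, so \cref{thm:duality_map_intersection} (with $x_0=f_0$, $u_0^\ast=0$) yields $z\in W$ with $J(f_0+z)\cap W^{\circ}\neq\emptyset$. Then $f:=f_0+z$ is feasible, any $\hat L\in J(f)\cap W^{\circ}=J(f)\cap\vecspan\{L_i\}$ is a representer, and by the reduction computation $f$ has minimal norm among feasible points; since $h$ is non‑decreasing and $\Omega$ is constant on $F_A\subseteq\partial B_{\norm{f}}$ (a convex set to which the segment clause applies, so the exceptional radii cause no harm), $f$ in fact minimises $\Omega$. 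Hence $\Omega$ is admissible, and reflexivity enters precisely through \cref{thm:duality_map_intersection}, which both guarantees a minimal‑norm feasible point and equips it with a representer.

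\textbf{Main obstacle.} I expect the crux to be the segment lemma and, more broadly, upgrading the cheap monotonicity of $r\mapsto\inf_{\norm{f}=r}\Omega(f)$ to genuine radiality of $\Omega$ away from a \emph{countable} set of radii. The geometric condition only relates $\Omega(f)$ to its values on hyperplanes supporting $B_{\norm{f}}$ at $f$, so two points of equal norm are directly comparable only when they share a supporting functional; connecting arbitrary equinormal points, and thereby identifying the exceptional radii with the jumps of $h$, will require a chaining argument through intermediate radii, controlled by the monotonicity of $h$ and by the supply of exposed sphere points available in a reflexive (hence Radon–Nikod\'ym) Banach space.
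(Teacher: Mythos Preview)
Your sufficiency direction is sound and coincides with the paper's: both invoke \cref{thm:duality_map_intersection} on the closed subspace $W=\bigcap_i\ker L_i$ to manufacture a feasible point carrying a representer, then argue it minimises $\Omega$.

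The necessity direction has a genuine gap, which you correctly identify as the ``main obstacle'' but do not close. Your monotonicity lemma only shows that $h(r)=\inf_{\|f\|=r}\Omega(f)$ is nondecreasing; it does \emph{not} give the pointwise inequality $\Omega(\hat f)\le\Omega(f)$ for $\|\hat f\|<\|f\|$, because the hyperplane $\{L=r\}$ you build through $y$ need not support $B_r$ at any prescribed $\hat f$. More fundamentally, your reduction asserts only that \emph{some} point of the minimal-norm face $F_A$ minimises $\Omega$ over $A$, not that a given $\hat f\in F_A$ does; so you cannot anchor comparisons at a specific base point. Your segment-lemma sketch inherits exactly this defect: once $f_1,f_2$ sit in a common $F_A$, the equality $\min_A\Omega=\min_{F_A}\Omega$ does not force $\Omega(f_1)=\Omega(f_2)$, and ``hyperplanes that expose $f_1$ inside this face'' cannot work, since any feasible set containing both $f_1$ and $f_2$ has both in its minimal-norm face. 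The Radon--Nikod\'ym remark does not help here.

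The paper closes the gap by a different route that your outline does not anticipate. First, \cref{lma:tangentially_nondecreasing} replaces your face-level reduction by the \emph{pointwise} tangential condition $\Omega(f)\le\Omega(f+f_T)$ for every $f$, every $L\in J(f)$ and every $f_T\in\ker L$; this pins the inequality at the specific $f$ and renders the segment clause a one-line consequence. Second, \cref{lma:circular_bound} upgrades the tangential inequality to the full pointwise bound $\Omega(\hat f)\le\Omega(f)$ whenever $\|\hat f\|<\|f\|$ via a concrete chaining: an intermediate-value argument on the set-valued map $t\mapsto\{\lambda L_t(\hat f):L_t\in J(\hat f+t f_{T_1})\}$ finds a tangent from a point along one tangent back onto the ray $\lambda\hat f$, and iterated tangents carry the bound around the sphere. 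Third, and entirely absent from your plan, the paper \emph{radially mollifies} $\Omega$; the mollified function stays tangentially nondecreasing, is radially continuous, hence radially symmetric by a short contradiction, and since two monotone one-variable functions with identical mollification agree off their (countable) jump sets, this is what pins down $\mathcal R$. Without the pointwise bound of \cref{lma:circular_bound} the mollification argument cannot start, and without mollification your outline has no mechanism to pass from monotonicity of an infimum to genuine radiality off a countable set.
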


The proof given in~\cite{schlegel2019} for the analogue of this theorem (theorem 3.2 in~\cite{schlegel2019}) is in fact still entirely valid. We thus only comment briefly on a few important points. Note in particular that from the fact that for any  $f_T\in\bigcup\limits_{L\in J(f)}\ker(L)$ we have
  $$L(f+f_T)=L(f)=\norm{L}\cdot\norm{f}\leq\norm{L}\cdot\norm{f+f_T}$$
  and so $\norm{f}\leq\norm{f+f_T}$. By strict convexity the inequality is in fact strict so that the bound for the mollification in part 2 of theorem 3.2 in~\cite{schlegel2019} remains valid. It is also clear that \cref{spf:tangential_bound_part1} of the proof of \cref{lma:tangential_bound} holds for $f=0$ so 0 is an admissible point. Thus $\Omega$ is without loss of generality minimised at 0 with $\Omega(0)=0$. All other parts of the proof of theorem 3.2 are also clearly still valid.

  \subsection{Non-strictly convex spaces and $\leb{1}$}
  Obtaining a general, closed form geometric interpretation of the tangential bound as we presented above is very difficult for spaces which are not strictly convex. This is due to the large geometric variety of Banach spaces, making it very hard to make any statements about the shape of the unit ball, even locally. We can e.g.\ construct a Banach space with a rotund point such that no point in its neighbourhood is rotund. Similarly a convex function on $\R$ may not be differentiable on a countable dense subset (c.f.\ e.g.~\cite{lindenstrauss2012,phelps1993}) so also smoothness does not allow statements about surrounding points. Worse even, there might not even be any exposed point, e.g.\ the space $c_0$ does not contain exposed points. We are thus going to restrict our intention to the space that is most commonly used in applications, $\leb{1}_n$. The space $\leb{1}$ is only reflexive if it is finite dimensional but in applications we are often going to do computations in a finite truncation of $\leb{1}$ so that this is an interesting case to consider.\\
  Fixing the space to be a concrete example does remove the issue of geometric variety and it turns out that this allows to run an argument similar to the one presented in \cref{sec:strictly-convex}.

 \begin{lemma}\label{lma:circular_bound_l1}
  If for every exposed face of the norm ball in $\leb{1}_n$ $\Omega$ attains its minimum in at least one point and for every $f$ in the face where the minimum is attained and every $L\in J(f)$ exposing the face and every $f_T\in\ker(L)$ we have $\Omega(f+f_T) \geq \Omega(f)$ then for any fixed admissible $\hat{f}\in\leb{1}_n$ we have that
  $$\Omega(\hat{f})\leq\Omega(f)$$
  for all $f\in\leb{1}_n$ such that $\norm{\hat{f}} < \norm{f}$.
\end{lemma}

\begin{proof}
  \begin{figure}[b]
      \begin{subfigure}[t]{.5\textwidth}
        \centering
        \includegraphics[width=.95\linewidth]{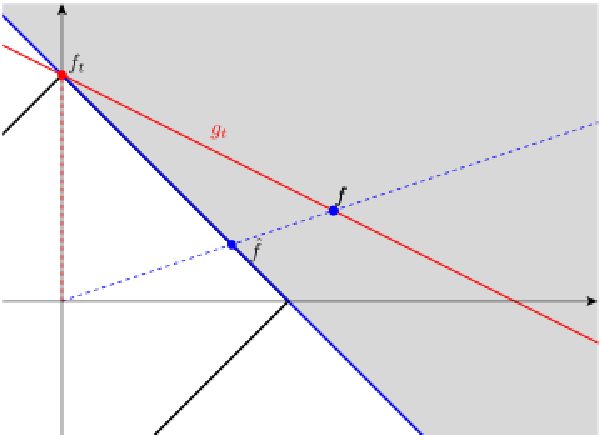}
        \caption{\footnotesize We can go from an admissible point in a face $F$ to a vertex and from there bound the minima in the face $\lambda F$ for $1<\lambda$.}\label{fig:radially_increasing_l1}
      \end{subfigure}
      \begin{subfigure}[t]{.5\textwidth}
        \centering
        \includegraphics[width=.95\linewidth]{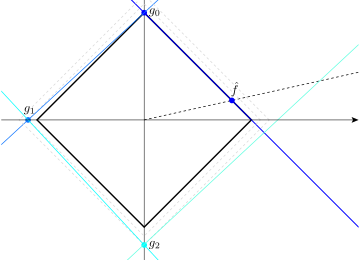}
        \caption{\footnotesize By repeatedly taking steps along tangents we can move all the way around the unit ball without moving far away from it.}\label{fig:radiating_bound_l1}
      \end{subfigure}
      \caption{Extending the tangential bound to all points outside the circle.}
    \end{figure}
    \noindent
  \begin{subproof}[Bound $\Omega$ on the half spaces given by the tangent planes through $\hat{f}$]\label{spf:radially_nondecreasing}
    As in \cref{sec:strictly-convex} we first show that $\Omega$ is radially nondecreasing. Notice that in $\leb{1}_n$ every vertex $e_i = (0,\ldots,0,1,0,\ldots,0)$ of the unit ball is an exposed point and hence admissible. If we fix an admissible $\hat{f}\in\leb{1}_n$ then it is either one of the vertices $e_i$, or a minimum within a face which is the convex hull of several $e_i$.\\
    \ \\
  If $\hat{f}$ is one of the vertices, $e_k$ say, then it is clear that there exists a linear functional $\hat{L}$ exposing it so that along the tangent given by $\hat{L}$ we can reach a different vertex $\lambda_0 e_j$ for $\lambda_0>1$ say. Now by the same argument we can find a tangent in the reverse direction, connecting $\lambda_0 e_j$ to $\lambda_1 e_k$, $\lambda_1>\lambda_0$. It is clear that we can control the size of $\lambda_0$ and $\lambda_1$ to hit the desired point $\lambda\hat{f}$ for $1<\lambda$.\\
  If on the other hand $\hat{f}$ is the minimum within a face $F$ exposed by the linear functional $\hat{L}$ then $\hat{f}$ satisfies the tangential bound for $\hat{L}$. Thus from $\hat{f}$ we can reach any vertex, $e_k$ say, on the boundary of the face $F$ along a tangent given by $\hat{L}$. It is clear that $e_k$, being an exposed point point, has a tangent plane which is close to the face $F$, so that we can reach the minimum in the face $\lambda F$ for $1<\lambda$. Note that this minimum might not be $\lambda\hat{f}$. This is illustrated in \cref{fig:radially_increasing_l1}.\\
  Combining both arguments we see that the minimum of $\Omega$ within a given face is a nondecreasing function of the norm. Clearly with the tangent planes of the minima we get the same bound for any half space spanned by a tangent plane at $\hat{f}$ as in \cref{sec:strictly-convex}.
\end{subproof}

\begin{subproof}[Extend the bound around the circle]
  The fact that we can extend the bound around the circle in the same way as previously is clear from the arguments in \cref{spf:radially_nondecreasing}. We already noticed that if $\hat{f}$ is within a face we can reach any vertex on the boundary of the face. We further know that from a vertex we can get across any face containing it to another vertex while staying arbitrarily close to the face, as illustrated in \cref{fig:radiating_bound_l1}. Hence it is clear that we can reach any admissible $\overline{f}$ with $\norm{\overline{f}}>\norm{\hat{f}}$.\\
  Putting both observations together we get the claim.
\end{subproof}
\end{proof}

This proof illustrates that in the case of $\leb{1}_n$ it may be convenient to view $\Omega$ as a function of the faces of the norm ball. In other words we are thinking of the faces as being collapsed to one point where $\Omega$ is minimised. Viewed as a function of the faces $\Omega$ is indeed almost radially symmetric again.

\begin{theorem}
  \label{thm:almost_radially_symmetric_non_reflexive}
  A function $\Omega\,\colon\leb{1}_n\rightarrow\R$ is admissible if and only if viewed as a function $\overline{\Omega}$ of the faces $F$ of the norm ball in $\leb{1}_n$
  $$\overline{\Omega}(F) = \min\limits_{f\in F}\Omega(f)$$
  it is of the form
  $$\overline{\Omega}(F) = h(\norm{f}_{\leb{1}_n} \setseparator f\in F)$$
  for some nondecreasing $h\,\colon [0,\infty)\rightarrow\R$ whenever $\norm{f}_{\leb{1}_n}\neq r$ for $r\in\mathcal{R}$. Here $\mathcal{R}$ is an at most countable set of radii where $h$ has a jump discontinuity. For any $f$ with $\norm{f}_{\leb{1}_n}=r\in\mathcal{R}$ the value $\overline{\Omega}(F)$ is only constrained by the monotonicity property., i.e.\ it has to lie in between $\lim\limits_{t\nearrow r}h(t)$ and $\lim\limits_{t\searrow r}h(t)$.\\
  \ \\
  Moreover in points of continuity of $h$ the function $\Omega$ attains its minimum in a face $F$ in every exposed point within the face.
\end{theorem}

\begin{proof}
  The proofs for uniform Banach spaces and strictly convex Banach spaces remain largely valid, only few extra considerations are required. We are going to briefly discuss sections which remain valid and present in full any extra arguments which are required.\\
  Firstly, the fact that continuity in radial direction implies radial symmetry of $\overline{\Omega}$ is clear since we only need to consider admissible points and for two admissible points $f$ and $g$ the previous argument obviously holds.\\
  For this observation to be useful we need to verify again that the radially mollified regulariser
    $$\widetilde{\Omega}(f) = \int\limits_{-1}^0 \rho(t)\Omega\left((\norm{f}-t)\frac{f}{\norm{f}}\right)\differential{t}$$
    is admissible if $\Omega$ was admissible.\\
    More precisely we check that $\widetilde{\Omega}$ is still non-decreasing along tangential directions, i.e.~we need to show that for an admissible $f$ for any $L\in J(f)$ exposing the face containing $f$ and every $f_T\in\ker(L)$ we still have
    \begin{multline}
      \label{eq:integral_estimate_non_rotund}
      \widetilde{\Omega}(f+f_T) = \int\limits_{-1}^0 \rho(t)\Omega\left((\norm{f+f_T}-t)\frac{f+f_T}{\norm{f+f_T}}\right)\differential{t}\\
      \geq \int\limits_{-1}^0 \rho(t)\Omega\left(\left(\norm{f}-t\right)\frac{f}{\norm{f}}\right)\differential{t} = \widetilde{\Omega}(f)
    \end{multline}
    The previous proof was  based on the fact that $\norm{f+f_T} > \norm{f}$. Whenever this is true the proof holds, so we only need to check the case when $\norm{f+f_T}=\norm{f}$. But in this case we have that
  $$\Omega\left((\norm{f+f_T}-t)\frac{f+f_T}{\norm{f+f_T}}\right) = \Omega\left((\norm{f}-t)\frac{f+f_T}{\norm{f}}\right) = \Omega\left(\frac{\norm{f}-t}{\norm{f}}(f+f_T)\right)$$
  Since $\frac{\norm{f}-t}{\norm{f}}f_T\in\ker(L)$ we have that indeed
  $$\Omega\left((\norm{f+f_T}-t)\frac{f+f_T}{\norm{f+f_T}}\right) \geq \Omega\left((\norm{f}-t)\frac{f}{\norm{f}}\right)$$
  Since $\rho$ is positive that means the integrand of $\widetilde{\Omega}(f+f_T)$ is greater or equal than the integrand of $\widetilde{\Omega}(f)$ so that the property of being nondecreasing along all tangents is indeed preserved.\\
  \ \\
  Putting these two observations together we obtain the result. We know that as a function of the faces $\overline{\Omega}$ is a monotone function of the norm, so a monotone function on the real line. After mollification $\overline{\Omega}$ is in fact radially symmetric. The same considerations as before say that $\overline{\Omega}$ must have been of the claimed form.\\
  \ \\
  The converse is clear since the value of $\overline{\Omega}$ is defined to be the minimum across each face, so minima exist and clearly satisfy the tangential bound.\\
  \ \\
  For the moreover part let $F$ be a face of the unit ball and $g$ a minimum of $\Omega$ in $F$. Assume further that $h$ is continuous in $\norm{g}$. Fix a vertex $e_j$ in $F$. Then there clearly exists a tangent from $\lambda e_j$ to $g$ for $1-\varepsilon < \lambda < 1$ and thus $\Omega(\lambda e_j) \leq \Omega(g)$. By continuity of $h$ in $\norm{g}=\norm{e_j}$ we have $\Omega(\lambda e_j)\converges[1]{\lambda}\Omega(e_j)$ and so $\Omega(e_j) \leq \Omega(g)$. Since $g$ is the minimum in $g$ this means $\Omega(e_j)=\Omega(g)$.
\end{proof}

This shows that a very similar intuition to the results for strictly convex spaces indeed is true for $\leb{1}_n$. Moreover it shows that any admissible regulariser in $\leb{1}_n$ will attain its minimum at the vertices which is exactly the reason for their use in applications.
\section{Consequences and Optimality}\label{sec:consequences}
Just as presented in the authors work~\cite{schlegel2019} an important consequence of above results is that, if one relies on the representer theorem for learning, in fact the solution of the regularised interpolation problem in most cases does not depend on the regulariser but is determined by the function space alone. This has two important consequences. Firstly it means we are free to work with whatever regulariser is most convenient for our purpose, whether this is computational applications or proving theoretical results. Secondly it illustrates the importance of extending well established learning methods for Hilbert spaces to Banach spaces to allow for a greater variety of spaces to learn in.\\
In this section we discuss this fact and also illustrate why one can not hope to weaken the assumption on the function space any further than reflexivity.
\subsection{The solution is determined by the space}
Throughout this section we say that a function $f_0$ is a representer theorem solution of (\ref{eq:interpolation_problem}) if it is a solution of (\ref{eq:interpolation_problem}) in the sense of \cref{def:admissibility}, i.e.\ such that there exists $\hat{L} = \sum\limits_{i=1}^m c_i L_i$ such that $\hat{L}\in J(f_0)$. To prove above claim that the solution is often independent of the regulariser we are going to show that in most cases a function $f_0$ is a representer theorem solution of (\ref{eq:interpolation_problem}) if and only if it is a solution of the minimal norm interpolation problem
\begin{equation}
  \label{eq:minimal_norm_interpolation}
  \inf\left\{\norm{f} \setseparator f\in\banachspace,\, L_i(f)=y_i\,\forall i\in\N_m\right\}
\end{equation}
This follows by combining above results with a result by Micchelli and Pontil from~\cite{miccelli2004}. They consider the  minimal norm interpolation problem
\begin{equation}
  \label{eq:minimal_norm_interpolation}
  \inf\{\norm{f}_X \setseparator f\in X, L_i(f) = y_i\,\forall i\in\N_m\}
\end{equation}
for a general Banach space $X$. Under the assumption that $X$ is reflexive they prove a necessary and sufficient condition for a function to be a solution of this problem.
\begin{proposition}[Theorem 1 in~\cite{miccelli2004}]\label{prop:minimal_norm_solution}
Let $X$ be reflexive. $f_0$ is a solution of~\cref{eq:minimal_norm_interpolation} if and only if it satisfies the constraints $L_i(f_0)=y_i$ and there is a linear combination of the continuous linear functionals defining the problem which peaks at $f_0$, i.e.~there exists $(c_1,\ldots,c_m)\in\R^m$ such that
$$\sum\limits_{i=1}^m c_i L_i(f_0) = \norm{\sum\limits_{i=1}^m c_i L_i}_{\banachspace^\ast}\cdot\norm{f_0}_\banachspace$$
\end{proposition}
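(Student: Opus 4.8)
The plan is to read Proposition \ref{prop:minimal_norm_solution} as nothing more than the Hahn--Banach duality formula for the distance from the origin to the affine constraint set, together with the fact that this set has finite codimension. Write $W=\bigcap_{i=1}^m\ker L_i$, a closed subspace of $X$, and let $A=\{f\in X\setseparator L_i(f)=y_i\ \forall i\}$ be the (assumed non-empty) set of interpolants; then $A=f_0+W$ for any fixed interpolant $f_0$, and \cref{eq:minimal_norm_interpolation} is exactly the computation of $\operatorname{dist}(0,A)$. The one algebraic input is that the annihilator of $W$ is $W^\circ=\vecspan\{L_1,\ldots,L_m\}$ (the inclusion $\supseteq$ is immediate, and $\subseteq$ is the standard lemma that a continuous functional vanishing on a finite intersection of kernels lies in their span), so $W^\circ$ is finite-dimensional.

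For the ``only if'' direction, assume $f_0$ solves \cref{eq:minimal_norm_interpolation}, so $\norm{f_0}_X=\operatorname{dist}(0,A)$. I would invoke the identity
$$\operatorname{dist}(0,A)=\sup\left\{\Lambda(f_0)\setseparator \Lambda\in W^\circ,\ \norm{\Lambda}_{X^\ast}\le 1\right\},$$
whose ``$\le$'' half is the one-line estimate $\Lambda(f_0)=\Lambda(f_0+w)\le\norm{f_0+w}_X$ for every $w\in W$ when $\Lambda$ annihilates $W$ and has norm at most one, and whose ``$\ge$'' half is the Hahn--Banach step: when $\operatorname{dist}(0,A)>0$ one has $f_0\notin W$, so $\Lambda_0(w+tf_0):=t\,\operatorname{dist}(0,A)$ is well defined on $W\oplus\R f_0$, satisfies $\norm{\Lambda_0}\le 1$ (because $f_0+w/t\in A$ for $t\neq0$), and extends to $X$ with norm $\le1$ and $\Lambda_0\in W^\circ$. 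Since the unit ball of the finite-dimensional space $W^\circ$ is compact, the supremum is attained, say at $\hat L=\sum_i c_iL_i$; then $\hat L(f_0)=\norm{f_0}_X$ and $\norm{\hat L}_{X^\ast}\le1$, and the chain $\norm{f_0}_X=\hat L(f_0)\le\norm{\hat L}_{X^\ast}\norm{f_0}_X\le\norm{f_0}_X$ forces equality throughout, which is precisely the stated peaking identity (and, after rescaling the $c_i$ by $\norm{f_0}_X$, says $\hat L\in J(f_0)$ when $f_0\neq0$; the case $\operatorname{dist}(0,A)=0$ is trivial since then $f_0=0$).

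For the ``if'' direction, suppose $f_0$ interpolates and $\hat L=\sum_i c_iL_i$ peaks at $f_0$. Because $\hat L\in W^\circ$ it is constant on $A$, equal to $\sum_i c_iy_i$, so for any interpolant $f$ we get $\norm{\hat L}_{X^\ast}\norm{f_0}_X=\hat L(f_0)=\hat L(f)\le\norm{\hat L}_{X^\ast}\norm{f}_X$; dividing by $\norm{\hat L}_{X^\ast}$ yields $\norm{f_0}_X\le\norm{f}_X$ (the degenerate case $\hat L=0$ forces $\vecspan\{L_i\}=\{0\}$, hence all $y_i=0$ and the optimal $f_0=0$). Thus $f_0$ is a minimal-norm interpolant.

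I do not anticipate a serious obstacle: the argument is Hahn--Banach plus finite-dimensional compactness, and, importantly, it uses no smoothness or strict convexity of $X$ --- the possible multivaluedness of $J$ is irrelevant because we need only one peaking functional and we extract it by compactness rather than by differentiating the norm. The point worth flagging is where reflexivity enters: it is used only to guarantee that a solution of \cref{eq:minimal_norm_interpolation} exists at all (the convex norm-closed set $A$ is weakly closed and bounded sublevel sets of $\norm{\cdot}_X$ are weakly sequentially compact, so a minimiser exists); granted existence, the equivalence itself holds in any normed space.
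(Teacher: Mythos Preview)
The paper does not prove \cref{prop:minimal_norm_solution}; it is quoted verbatim from Micchelli--Pontil~\cite{miccelli2004} and used as a black box in the proof of \cref{thm:space_dependence}. So there is no ``paper's own proof'' to compare against. Your argument is the standard Hahn--Banach duality proof and is essentially correct: the identity $\operatorname{dist}(0,A)=\sup\{\Lambda(f_0)\setseparator\Lambda\in W^\circ,\ \norm{\Lambda}\le 1\}$ together with finite-dimensionality of $W^\circ=\vecspan\{L_i\}$ gives attainment, and the ``if'' direction is the one-line estimate. Incidentally, that one-line estimate is exactly what the paper reproduces in Part~2 of the proof of \cref{thm:space_dependence}, so you are in good company.

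One genuine slip: in the degenerate case of the ``if'' direction you write that $\hat L=0$ forces $\vecspan\{L_i\}=\{0\}$. That is false --- the $L_i$ could be linearly dependent and the given coefficients $(c_i)$ could produce a trivial combination while the individual $L_i$ are nonzero. The real issue is with the \emph{statement} as written: if one allows $c=0$ then the peaking condition is vacuously satisfied by any interpolant, and the ``if'' direction collapses. The intended reading (and what is needed for the argument) is $\hat L\neq 0$; once that is assumed, your division by $\norm{\hat L}_{\banachspace^\ast}$ is legitimate. In the ``only if'' direction you implicitly avoid this problem because the maximiser $\hat L$ you extract satisfies $\hat L(f_0)=\norm{f_0}>0$ whenever $f_0\neq 0$, forcing $\hat L\neq 0$. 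So the fix is simply to state the nonzeroness hypothesis explicitly rather than to argue (incorrectly) that its failure trivialises the problem.

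Your remark about where reflexivity enters is accurate and worth keeping: the characterisation itself needs only Hahn--Banach and finite codimension, while reflexivity is what guarantees a minimiser exists to characterise.
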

This corresponds to $h(t)=t$ in \cref{eq:interpolation_problem}. We now get the following theorem.
\begin{theorem}\label{thm:space_dependence}
Let $\banachspace$ be a reflexive Banach space and $\Omega$ admissible. Then any representer theorem solution of~(\ref{eq:interpolation_problem}) is a solution of~(\ref{eq:minimal_norm_interpolation}).\\
Moreover for any solution of~(\ref{eq:minimal_norm_interpolation}) there exists a representer theorem solution of~(\ref{eq:interpolation_problem}) in the same face of the norm ball. Thus in particular if $\banachspace$ is strictly convex then $f_0$ is is a representer theorem solution of~(\ref{eq:interpolation_problem}) if and only if it is a solution of~(\ref{eq:minimal_norm_interpolation}).
\end{theorem}

\begin{proof}
  \begin{subproof}[A solution of~(\ref{eq:interpolation_problem}) is a solution of~(\ref{eq:minimal_norm_interpolation})]
    Assume that $f_0$ is a representer theorem solution of~(\ref{eq:interpolation_problem}). Then since\\
    $\vecspan\{L_i\setseparator i\in\N_m\}={(\vecspan{\{L_i\setseparator i\in\N_m\}}_\perp)}^\perp$ we have for any  $\hat{L} = \sum\limits_{i=1}^m c_iL_i \in J(f_0)$ and all $f_T\in\vecspan\{L_i\setseparator i\in\N_m\}_\perp$ that
    $$\norm{\hat{L}}_{\banachspace^\ast}\cdot\norm{f_0}_\banachspace = \hat{L}(f_0) = \hat{L}(f_0+f_T) \leq \norm{\hat{L}}_{\banachspace^\ast}\cdot\norm{f_0+f_T}$$
    so $\norm{f_0}_\banachspace\leq\norm{f_0+f_T}_\banachspace$ and $f_0$ is a solution of (\ref{eq:minimal_norm_interpolation}).
  \end{subproof}

  \begin{subproof}[For any sol. of~(\ref{eq:minimal_norm_interpolation}) $\exists$ a sol. of~(\ref{eq:interpolation_problem}) in the same face]
    Assume $f_0$ is a solution of the minimal norm interpolation problem~(\ref{eq:minimal_norm_interpolation}). Then by \cref{prop:minimal_norm_solution} there exists an $\hat{L} = \sum\limits_{i=1}^m c_i L_i$ such that $\hat{L}(f_0) = \norm{\hat{L}}_{\banachspace^\ast}\cdot\norm{f_0}_\banachspace$ and thus $\frac{\norm{f_0}_\banachspace}{\norm{\hat{L}}_{\banachspace^\ast}}\hat{L}\in J(f_0)$.\\
    Further if $f_0$ is an admissible point in the sense of \cref{def:admissibility}, then the tangential bound \cref{lma:tangential_bound} applies and
    $$\Omega(f_0)\leq\Omega(f_0+f_T) \qquad \forall f_T\in\vecspan\{L_i\setseparator i\in\N_m\}_\perp$$
    so $f_0$ is a representer theorem solution of (\ref{eq:interpolation_problem}).\\
    If $f_0$ is not admissible in the sense of \cref{def:admissibility} then there exists an admissible point $\overline{f_0}$ in the same face for which above inequality holds so that $\overline{f_0}$ is a representer theorem solution of (\ref{eq:interpolation_problem}).\\
    \ \\
    If $\banachspace$ is strictly convex then every point is admissible and $f_0$ is is a representer theorem solution of~(\ref{eq:interpolation_problem}) if and only if it is a solution of~(\ref{eq:minimal_norm_interpolation}).
  \end{subproof}
\end{proof}

This result shows that for any admissible regulariser on a reflexive, strictly convex Banach space the set of solutions with a dual element in the linear span of the defining linear functionals is identical. This in particular means that it is the choice of the function space, and only the choice of the space, which determines the solution of the problem. We are thus free to work with whichever regulariser is most convenient in application. Computationally in many cases this is likely going to be $\frac{1}{2}\norm{\cdot}^2$. For theoretical results other regularisers may be more suitable, such as in the aforementioned paper~\cite{miccelli2004} which heavily relies on a duality between the norm of the space and its continuous linear functionals.\\
For a reflexive Banach space which is not strictly convex the solution is also mostly determined by the space, the regulariser only determines the point(s) within a certain face of the norm ball which is optimal. The face containing the solution again is independent of $\Omega$.

\subsection{Reflexivity is necessary}
The fact that \Cref{prop:minimal_norm_solution} is an if and only if suggests that one can not do better than reflexivity in the assumptions on the space without weakening other assumptions. And indeed this is the case. The duality mapping $J$ is surjective if and only if the space $X$ is reflexive. Thus in a non-reflexive Banach space we can find $L_i$ which are not the image of any element in $X$ under the duality mapping. In this case there is no hope of finding a solution in the sense of \Cref{def:admissibility}.\\
As an example consider $X=l_1$ with $X^\ast=l_\infty$. Let $L_1=(x_i)_{i\in\N}$ where $x_i=\frac{i}{i+1}$ for $i$ odd and $x_i=0$ for $i$ even and $L_2=(y_i)_{i\in\N}$ where $y_i=\frac{i}{i+1}$ for $i$ even and $y_i=0$ for $i$ odd, i.e.
$$L_1=(\frac12,0,\frac34,0,\ldots) \mbox{ and } L_2=(0,\frac23,0,\frac45,\ldots)$$
Then $\norm{L_1} = \norm{L_2} = 1$ but there cannot be a $l_1$-sequence of norm 1, $x$ say, such that $L_1(x)=1$ or $L_2(x)=1$. So $L_1,L_2\not\in J(X)$. It is also clear by construction that the same is true for linear combinations of $L_1$ and $L_2$ so
$$\vecspan\{L_1,L_2\}\cap J(X) = \{0\}$$
This means there is no hope of finding a solution in the sense of \cref{def:admissibility} with a dual element in the linear span of the defining linear functionals.

\section{Examples}\label{sec:examples}
In this section we give several examples of Banach spaces to which the results in this paper apply. These examples are taken from the work of Zhang, Xu and Zhang\cite{zhang2009}, and Zhang and Zhang\cite{zhang2012}.

In these papers the theory of reproducing kernel Banach spaces (RKBS) is developed. This generalises the very well known theory of reproducing kernel Hilbert spaces, providing several advantages which we will discuss throughout this section. We begin by stating some of the key definitions and results for RKBS.\\
\ \\
We call a Banach space $\banachspace$ a Banach space of functions if for each $f\in\banachspace$ its norm $\norm{f}_\banachspace$ vanishes if and only if $f$ is identically zero.
\begin{definition}[Reproducing Kernel Banach Space]
  A Banach space of functions $\banachspace$ is a reproducing kernel Banach space (RKBS) if it is reflexive, its dual space $\banachspace^\ast$ is isometric to a Banach space of functions $\banachspace^\sharp$ and point evaluations are continuous on both $\banachspace$ and $\banachspace^\sharp$.
\end{definition}
It is convenient to view an element $f^\ast\in\banachspace^\ast$ as a function on $X$ by identifying it via the isometry with $f^\sharp\in\banachspace^\sharp$ and simply writing $f^\ast(x)$.\\
With this definition one obtains a theorem reminiscent of reproducing kernel Hilbert spaces.
\begin{theorem}
  Let $\banachspace$ be a RKBS on a set $X$. Then there exists a unique function\\
  $K:X\times X\rightarrow\R$ such that
  \begin{enumerate}[label=(\emph{\alph*})]
  \item For every $x\in X$, $K(\cdot, x)\in\banachspace^\ast$ and $f(x) = \dualmap{\banachspace}{f}{K(\cdot,x)}$ for all $f\in\banachspace$.
    \item For every $x\in X$, $K(x,\cdot)\in\banachspace$ and $f^\ast(x) = \dualmap{\banachspace}{K(x,\cdot)}{f^\ast}$ for all $f^\ast\in\banachspace^\ast$.
    \item $\overline{\vecspan}\{K(x,\cdot) \setseparator x\in X\} = \banachspace$
    \item $\overline{\vecspan}\{K(\cdot,x) \setseparator x\in X\} = \banachspace^\ast$
    \item For all $x,y\in X$ $K(x,y) = \dualmap{\banachspace}{K(x,\cdot)}{K(\cdot,y)}$
  \end{enumerate}
\end{theorem}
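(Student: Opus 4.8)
The plan is to build $K$ out of the point-evaluation functionals on $\banachspace$ and on $\banachspace^\sharp$, using the isometry $\banachspace^\ast\cong\banachspace^\sharp$ together with reflexivity to identify $(\banachspace^\sharp)^\ast$ with $\banachspace$. First I would fix $x\in X$ and note that point evaluation at $x$ is a continuous linear functional on $\banachspace$, hence an element $\delta_x\in\banachspace^\ast$. Identifying $\banachspace^\ast$ with the function space $\banachspace^\sharp$ via the given isometry, let $K(\cdot,x)\in\banachspace^\sharp\cong\banachspace^\ast$ be the function corresponding to $\delta_x$. By construction $\dualmap{\banachspace}{f}{K(\cdot,x)}=\delta_x(f)=f(x)$ for every $f\in\banachspace$, which is exactly (a).

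For (b) I would run the dual construction. Point evaluation at $x$ on $\banachspace^\sharp$ is continuous by hypothesis, so it lies in $(\banachspace^\sharp)^\ast$; transporting it along the isometry produces an element of $(\banachspace^\ast)^\ast=\banachspace^{\ast\ast}$, and reflexivity identifies this with a unique $K(x,\cdot)\in\banachspace$. Unwinding the identifications gives $\dualmap{\banachspace}{K(x,\cdot)}{f^\ast}=f^\ast(x)$ for all $f^\ast\in\banachspace^\ast$ (with $f^\ast(x)$ read as the value of the corresponding element of $\banachspace^\sharp$), which is (b). I then have to check that these two prescriptions are slices of one bivariate function: evaluating the $\banachspace^\sharp$-function $K(\cdot,x)$ at $y$ and comparing with $K(y,\cdot)$ evaluated at $x$ via $\delta_x$, using (b) with $f^\ast=\delta_x$ and the fact that $K(y,\cdot)\in\banachspace$, shows both numbers agree, so calling their common value $K(y,x)$ is consistent and defines $K:X\times X\to\R$. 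The very same computation — taking $f=K(x,\cdot)$ in (a), equivalently $f^\ast=K(\cdot,y)$ in (b) — gives $\dualmap{\banachspace}{K(x,\cdot)}{K(\cdot,y)}=K(x,\cdot)(y)=K(x,y)$, which is (e).

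Parts (c) and (d) I would get from Hahn--Banach plus the defining property that $\banachspace$ and $\banachspace^\sharp$ are Banach spaces of functions (norm vanishes iff the function is identically zero). If $\overline{\vecspan}\{K(x,\cdot)\setseparator x\in X\}$ were proper in $\banachspace$, some nonzero $f^\ast\in\banachspace^\ast$ would annihilate it; but $\dualmap{\banachspace}{K(x,\cdot)}{f^\ast}=f^\ast(x)=0$ for all $x$ forces $f^\ast$ to be the zero function, hence $\norm{f^\ast}=0$, a contradiction, giving (c). Part (d) is the mirror image: a proper closed subspace of the reflexive space $\banachspace^\ast$ would be annihilated by some nonzero $f\in\banachspace^{\ast\ast}=\banachspace$, and $\dualmap{\banachspace}{f}{K(\cdot,x)}=f(x)=0$ for every $x$ forces $f=0$. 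Uniqueness is then immediate: (a) pins down $\dualmap{\banachspace}{f}{K(\cdot,x)}=f(x)$ for all $f$, which determines $K(\cdot,x)$ as an element of $\banachspace^\ast$, hence as a function, for each $x$.

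The main obstacle is not a hard estimate but the bookkeeping: keeping the isometry $\banachspace^\ast\cong\banachspace^\sharp$ and the reflexive identification $(\banachspace^\sharp)^\ast\cong\banachspace$ consistently aligned, and in particular verifying cleanly that the ``row'' functions $K(\cdot,x)$ and the ``column'' functions $K(x,\cdot)$ are genuinely two cross-sections of a single $K$. Part (d) is also the one place where reflexivity does real work — it is what allows density in $\banachspace^\ast$ to be tested against $\banachspace$ rather than a strictly larger bidual — so I would take care to isolate that use.
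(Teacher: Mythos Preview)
Your argument is correct and is essentially the standard construction: define the second slot of $K$ via point evaluations on $\banachspace$, the first slot via point evaluations on $\banachspace^\sharp$ transported through the isometry and reflexivity, check the two cross-sections match, and deduce density and uniqueness by Hahn--Banach together with the ``Banach space of functions'' axiom. The bookkeeping you flag is exactly the right thing to watch, and your use of reflexivity in (d) is placed correctly.

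There is, however, nothing to compare against: the paper does not prove this theorem. It is stated in \cref{sec:examples} as background material quoted from Zhang, Xu and Zhang~\cite{zhang2009}, with no proof given or intended. Your sketch is in fact the argument one finds in that reference, so in that sense you have reconstructed the original source rather than diverged from it.
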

It now turns out that there is a convenient way of constructing reproducing kernel Banach spaces.
\begin{theorem}
  Let $\mathcal{W}$ be a reflexive Banach space with dual space $\mathcal{W}^\ast$ and let $\Phi:X\rightarrow\mathcal{W}$ and $\Phi^\ast:X\rightarrow\mathcal{W}^\ast$ maps such that $\overline{\vecspan}\Phi(X) = \mathcal{W}$ and $\overline{\vecspan}\Phi^\ast(X) = \mathcal{W}^\ast$. Then there exists a RKBS $\banachspace$ which is isometrically isomorphic to $\mathcal{W}$ given by
  $$\banachspace = \{\dualmap{\mathcal{W}}{u}{\Phi^\ast(\cdot)} \setseparator u\in\mathcal{W}\}\quad\mbox{ with norm }\quad\norm{\dualmap{\mathcal{W}}{u}{\Phi^\ast(\cdot)}}_\banachspace = \norm{u}_{\mathcal{W}}$$
  with dual space $\banachspace^\ast$ which is isometrically isomorphic to $\mathcal{W}^\ast$ and given by
  $$\banachspace^\ast = \{\dualmap{\mathcal{W}}{\Phi(\cdot)}{u^\ast} \setseparator u^\ast\in\mathcal{W}^\ast\}\quad\mbox{ with norm }\quad\norm{\dualmap{\mathcal{W}}{\Phi(\cdot)}{u^\ast}}_{\banachspace^\ast} = \norm{u^\ast}_{\mathcal{W}^\ast}$$
  The reproducing kernel is given by $K(x,y) = \dualmap{\mathcal{W}}{\Phi(x)}{\Phi^\ast(y)}$.
\end{theorem}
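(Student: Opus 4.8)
The plan is to construct the RKBS $\banachspace$ explicitly as the stated space of functions, verify that it is isometrically isomorphic to $\mathcal{W}$, identify its dual with $\mathcal{W}^\ast$, and then check the four defining properties of an RKBS together with the reproducing kernel identity. First I would define the linear map $T:\mathcal{W}\to\banachspace$ by $T u = \dualmap{\mathcal{W}}{u}{\Phi^\ast(\cdot)}$, i.e. $(Tu)(x) = \dualmap{\mathcal{W}}{u}{\Phi^\ast(x)}$, and declare $\norm{Tu}_\banachspace = \norm{u}_\mathcal{W}$. For this to be a well-defined norm one must check $T$ is injective: if $\dualmap{\mathcal{W}}{u}{\Phi^\ast(x)} = 0$ for all $x\in X$, then $u$ annihilates $\vecspan\Phi^\ast(X)$, hence by continuity annihilates $\overline{\vecspan}\Phi^\ast(X) = \mathcal{W}^\ast$, so $u = 0$. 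Thus $T$ is a linear isometric bijection onto $\banachspace$, which makes $\banachspace$ a Banach space; reflexivity transfers from $\mathcal{W}$ since isometric isomorphism preserves reflexivity. The same argument applied to $\Phi$ (using $\overline{\vecspan}\Phi(X) = \mathcal{W}$) shows $S:\mathcal{W}^\ast\to\banachspace^\ast$, $S u^\ast = \dualmap{\mathcal{W}}{\Phi(\cdot)}{u^\ast}$, is an isometric bijection onto the stated $\banachspace^\ast$, and one checks the natural pairing is $\dualmap{\banachspace}{Tu}{Su^\ast} = \dualmap{\mathcal{W}}{u}{u^\ast}$.

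Next I would exhibit the kernel. Define $K(x,y) = \dualmap{\mathcal{W}}{\Phi(x)}{\Phi^\ast(y)}$. For fixed $x$, the function $y\mapsto K(x,y)$ equals $(T\Phi(x))(y)$, so $K(x,\cdot) = T\Phi(x)\in\banachspace$; for fixed $x$, the function $y\mapsto K(y,x)$ equals $\dualmap{\mathcal{W}}{\Phi(\cdot)}{\Phi^\ast(x)}$, which is $(S\Phi^\ast(x))(\cdot)$, so $K(\cdot,x) = S\Phi^\ast(x)\in\banachspace^\ast$. Property (a): for $f = Tu\in\banachspace$, $f(x) = \dualmap{\mathcal{W}}{u}{\Phi^\ast(x)} = \dualmap{\banachspace}{Tu}{S\Phi^\ast(x)} = \dualmap{\banachspace}{f}{K(\cdot,x)}$, using the pairing identity. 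Property (b) is symmetric: for $f^\ast = Su^\ast$, writing $f^\ast(x)$ for the corresponding function in $\banachspace^\sharp$ (i.e. $\dualmap{\mathcal{W}}{\Phi(x)}{u^\ast}$), we get $f^\ast(x) = \dualmap{\banachspace}{T\Phi(x)}{Su^\ast} = \dualmap{\banachspace}{K(x,\cdot)}{f^\ast}$. Continuity of point evaluation on $\banachspace$ follows from (a): $\modulus{f(x)} \le \norm{f}_\banachspace\norm{K(\cdot,x)}_{\banachspace^\ast}$, and similarly on $\banachspace^\sharp$ from (b); so $\banachspace$ is genuinely an RKBS once we confirm it is a Banach space of functions, i.e. $\norm{f}_\banachspace = 0 \Rightarrow f\equiv 0$, which is exactly the injectivity of $T$ established above.

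For properties (c) and (d) I would argue by duality. Since $K(x,\cdot) = T\Phi(x)$ and $T$ is an isometric isomorphism, $\overline{\vecspan}\{K(x,\cdot):x\in X\} = T\big(\overline{\vecspan}\Phi(X)\big) = T\mathcal{W} = \banachspace$, giving (c); analogously $\overline{\vecspan}\{K(\cdot,x):x\in X\} = S\big(\overline{\vecspan}\Phi^\ast(X)\big) = S\mathcal{W}^\ast = \banachspace^\ast$, giving (d). Property (e) is immediate: $\dualmap{\banachspace}{K(x,\cdot)}{K(\cdot,y)} = \dualmap{\banachspace}{T\Phi(x)}{S\Phi^\ast(y)} = \dualmap{\mathcal{W}}{\Phi(x)}{\Phi^\ast(y)} = K(x,y)$.

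The only genuinely delicate point, and the one I would spend most care on, is verifying that the natural bilinear pairing between the function space $\banachspace = T\mathcal{W}$ and the function space $\banachspace^\ast = S\mathcal{W}^\ast$ really does coincide, under the identifications $T,S$, with the pairing $\dualmap{\mathcal{W}}{\cdot}{\cdot}$ — equivalently, that $S$ is not merely \emph{an} isometric embedding of $\mathcal{W}^\ast$ into $\banachspace^\ast$ but that the dual of $\banachspace$ (as an abstract Banach space isometric to $\mathcal{W}$) is \emph{represented} by exactly the function space $\{\dualmap{\mathcal{W}}{\Phi(\cdot)}{u^\ast}\}$ with the asserted norm, with the two function values $f^\ast(x)$ and $\dualmap{\banachspace}{K(x,\cdot)}{f^\ast}$ genuinely agreeing. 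This is where reflexivity of $\mathcal{W}$ is used essentially: it guarantees that evaluating elements of $\banachspace^\ast \cong \mathcal{W}^\ast$ against elements of $\banachspace\cong\mathcal{W}$ recovers the full pairing and that no quotient or completion issues arise. Once that compatibility is nailed down, everything else is bookkeeping with the isometries $T$ and $S$.
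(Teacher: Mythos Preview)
The paper does not give its own proof of this theorem; it is quoted without proof in \cref{sec:examples} as one of the key results from Zhang, Xu and Zhang~\cite{zhang2009}, on whose RKBS framework the examples section is based. Your argument is correct and is essentially the standard one: injectivity of $T$ and $S$ follows from the two density hypotheses, the norm is transported so that $T$ is by construction an isometric isomorphism (whence $\banachspace$ inherits completeness and reflexivity from $\mathcal{W}$), the dual of $\banachspace$ is identified with $S(\mathcal{W}^\ast)$ via the pairing $\dualmap{\banachspace}{Tu}{Su^\ast}:=\dualmap{\mathcal{W}}{u}{u^\ast}$, and properties (a)--(e) together with continuity of point evaluation are then formal consequences of the identities $K(x,\cdot)=T\Phi(x)$ and $K(\cdot,y)=S\Phi^\ast(y)$.

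One small remark on your final paragraph: the identification of the abstract dual $\banachspace^\ast$ with the function space $S(\mathcal{W}^\ast)$ does not actually require reflexivity --- the adjoint of an isometric isomorphism is always an isometric isomorphism, so $\banachspace^\ast\cong\mathcal{W}^\ast$ regardless, and your pairing formula is well defined in any case. Reflexivity enters only because the definition of RKBS adopted in the paper demands it (and because the surrounding theory of the paper lives in reflexive spaces), not for the algebraic bookkeeping you carry out.
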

As an example of these constructions consider the following example given by Zhang, Xu and Zhang \cite{zhang2009}. Let $X=\R$ and $\mathcal{W}=L^p(\mathbb{I})$ with $\mathbb{I}=[-\frac12,\frac12]$. With
$$\Phi(x)(t) = e^{-2\pi ixt}, \, \Phi^\ast(x)(t) = e^{2\pi ixt}, \quad x\in\R,t\in\mathbb{I}$$
we obtain a RKBS
$$\banachspace=\{f\in C(\R) \setseparator \supp\hat{f}\subseteq\mathbb{I},\hat{f}\in L^p(\mathbb{I})\}$$
with dual space
$$\banachspace^\ast=\{g\in C(\R) \setseparator \supp\check{g}\subseteq\mathbb{I},\check{g}\in L^q(\mathbb{I})\}$$
and kernel
$$K(x,y) = \dualmap{L^p(\mathbb{I})}{\Phi(x)}{\Phi^\ast(y)} = \frac{\sin\pi(x-y)}{\pi(x-y)}=\mbox{sinc}(x-y)$$
The duality pairing is given by
$$\dualmap{\banachspace}{f}{g} = \int\limits_{\mathbb{I}}\hat{f}(t)\check{g}(t)\differential{t} \quad f\in\banachspace, g\in\banachspace^\ast$$
For $p=q=2$ this construction corresponds to the usual space of bandlimited functions. For other values of $p$ we maintain the property of a Fourier transform with bounded support but consider a different $L^p$ norm making $\banachspace$ isometrically isomorphic to $L^p(\mathbb{I})$.\\
Since unlike Hilbert spaces of the same dimension the $L^p(\mathbb{I})$ spaces are not isomorphic to each other they exhibit a richer geometric variety which is potentially useful for the development of new learning algorithms.\\
Note that above example is one dimensional for notational simplicity and similar constructions yield RKBS isomorphic to $L^p_\mu(\R^d)$ where $\mu$ is a finite positive Borel measure on $\R^d$ as shown in Zhang and Zhang \cite{zhang2012}. The corresponding RKBS $\banachspace$ consists of functions of the form
$$f_u(x) = \frac{1}{{\mu(\R^d)}^{\frac{p-2}{p}}}\int\limits_{\R^d}u(t)e^{i\inner[]{x}{t}}\differential{\mu(t)}, \quad x\in\R^d, u\in L^P_\mu(\R^d)$$
and the reproducing kernel is given by
$$K(x,y) = \frac{1}{{\mu(\R^d)}^{\frac{p-2}{p}}}\int\limits_{\R^d}e^{i\inner[]{y-x}{t}}\differential{\mu(t)},\quad x,y\in\R^d$$
For $d=1$ and $\mu$ the Lebesgue measure this reduces to the above example.\\
The dual map in $L^p$ spaces is given by $f^\ast = \frac{f\modulus{f}^{p-2}}{\norm{f}^{p-2}_p}$ which in the given example means that for an element $f_u\in\banachspace$ the corresponding dual element is given by
$$f_u^\ast = \frac{u\cdot\modulus{u}^{p-2}}{\norm{u}_p^{p-2}}$$
Further the dual map in a reflexive Banach space is self-inverse so
$$(f_u^\ast)^\ast = f_u$$
\ \\
These constructions are of interest for various reasons. Firstly this allows us to learn in a larger variety of function spaces which may be of use if we are expecting the solution in a certain class due to prior knowledge, or if we fail to find a good enough solution in a Hilbert space, or if the data has some intrinsic structure that makes it impossible to embed into a Hilbert space.\\
Furthermore, as in contrast to Hilbert spaces two Banach spaces of the same dimension need not be isometrically isomorphic, Banach spaces exhibit a much richer geometric variety which is potentially useful for developing new learning algorithms.
\ \\
Secondly it is often desirable to use norms which are not induced by an inner product because they possess useful properties for application. It is often stated in the literature that a regulariser is used to enforce a certain property such as sparsity or smoothness. But as we showed in \cref{sec:consequences} it is in fact not the regulariser as such but the norm of the function space alone which provides any desired property.\\
As an example consider $L^1$ regularisation which is often used to induce sparsity of the solution. Sparsity occurs because in $L^1$ all extreme points of the unit ball lie on the coordinate axes. The finite dimensional spaces $l^1_d$ are reflexive and thus fall into the framework of this paper. The infinite dimensional spaces $l^1$ and $L^1$ are not reflexive but one can instead work in a $L^p$ space for $p$ close to 1, see e.g.\ Tropp~\cite{tropp2006}.\\
\ \\
\begin{appendix}
\bibliographystyle{acm}
\bibliography{representer.bib}
\pagebreak
\section{Appendix}

\subsection{Regularised Interpolation}
The proof of \cref{thm:problem_equivalence} is largely identical to the one presented in Argyriou, Micchelli and Pontil~\cite{argyriou2009} but requires a few minor adjustments to hold for the generality of reflexive Banach spaces. We present the full proof here.\\

\begin{proof}[Of \Cref{thm:problem_equivalence}]
  To prove that $\Omega$ is admissible for the regularised interpolation problem (\ref{eq:interpolation_problem}) we are going to show that $\Omega$ is tangentially nondecreasing in the sense of \cref{lma:tangential_bound} depending on the properties of the space $\banachspace$.\\
  Fix $0 \neq f\in\banachspace$ and $L\in J(f)$ and let $a_0$ be the unique nonzero minimiser of $\min\{\error(a\nu,y) \setseparator a\in\R\}$. For every $\lambda>0$ Consider the regularisation problem
  $$\min\left\{\error\left(\frac{a_0}{\norm{L}^2}L(f)\nu,y\right) + \lambda\Omega(f) \setseparator f\in\banachspace\right\}$$
  By assumption there exist solutions $f_\lambda\in\banachspace$ such that
  $$J(f_\lambda)\cap\vecspan{L}\neq\emptyset$$
  i.e.\ there exist $c_\lambda\in\R$ such that $c_\lambda L\in J(f_\lambda)$.\\
  Now fix any $g\in\banachspace$ such that $L\in J(g)$ which exists as $\banachspace$ is reflexive so $J$ is surjective. We then obtain
  \begin{equation}
    \label{eq:ineq_omega}
    \error(a_0\nu,y)+\lambda\Omega(f_\lambda) \leq \error\left(\frac{a_0}{\norm{L}^2}L(f_\lambda)\nu,y\right)+\lambda\Omega(f_\lambda) \leq \error(a_0\nu,y)+\lambda\Omega(g)
  \end{equation}
  where the first inequality follows from $a_0$ minimising $\error(a\nu,y)$ and the second inequality from $L(g)=\norm{L}^2$. This shows that $\Omega(f_\lambda)\leq\Omega(g)$ for all $\lambda$ and so by hypothesis the set $\{f_\lambda \setseparator \lambda>0\}$ is bounded. Hence there exists a weakly convergent subsequence $(f_{\lambda_l})_{l\in\N}$ such that $\lambda_l\converges{l}0$ and $f_{\lambda_l}\rightharpoonup\overline{f}$ as $l\rightarrow\infty$. Taking the limit inferior as $l\rightarrow\infty$ on the right hand side of inequality \cref{eq:ineq_omega} we obtain
  $$\error\left(\frac{a_0}{\norm{L}^2}L(\overline{f})\nu,y\right) \leq \error(a_0\nu,y)$$
  Since $a_0$ is by assumption the unique, nonzero minimiser this means that
  $$\frac{a_0}{\norm{L}^2}L(\overline{f}) = a_0 \Leftrightarrow L(\overline{f}) = \norm{L}^2$$
  But then since $L(\overline{f}) \leq \norm{L}\cdot\norm{\overline{f}}$ we have $\norm{L}\leq\norm{\overline{f}}$.\\
  Moreover since $J(f_\lambda)\cap\vecspan\{L\}\neq\emptyset$ we have $\norm{L}\cdot\norm{f_\lambda} = L(f_\lambda) \rightarrow \norm{L}^2$ and thus $\norm{f_\lambda}\rightarrow\norm{L}$. Since $\norm{\overline{f}} \leq \liminf\norm{f_\lambda}=\norm{L}$ (c.f. Brezis~\cite{brezis2011} Proposition 3.5 (iii)) we have $\norm{\overline{f}}=\norm{L}$ and thus $L\in J(\overline{f})$.\\
  Since the $f_\lambda$ are minimisers of the regularisation problem we have for all $g\in\banachspace$ such that $L(g) = \norm{L}^2$
  $$\error\left(\frac{a_0}{\norm{L}^2}L(f_\lambda)\nu,y\right)+\lambda\Omega(f_\lambda) \leq \error(a_0\nu,y)+\lambda\Omega(g)$$
  Since $a_0$ is the minimiser this implies in particular that
  $$\Omega(f_\lambda) \leq \Omega(g) \qquad \forall g\in\banachspace \mbox{ such that } L(g)=\norm{L}^2$$
  and taking the limit inferior again we obtain that $\overline{f}$ is in fact a solution of the interpolation problem
  $$\min\{\Omega(f) \setseparator f\in\banachspace, L(f)=\norm{L}^2\}$$
  Now this means that $\Omega(\overline{f}+f_T) \geq \Omega(\overline{f})$ for all $f_T\in\ker(L)$ and if $\overline{f} = f$ we are clearly done.\\
  If $\overline{f}\neq f$ we know that $f$ and $\overline{f}$ are in the same face as $L\in J(f)$ and $L\in J(\overline{f})$. They thus have the same error $\error$. If $\Omega(f) = \Omega(\overline{f})$  then both are equivalent minimisers and it is clear that both satisfy the tangential bound. If $\Omega(f) > \Omega(\overline{f})$ then $f$ is not admissible and does not need to satisfy the tangential bound.\\
  \ \\
  Finally note that the claim is trivially true for $L=0$ as in that case $\error$ is independent of $f$ and for every $\lambda$ the minimiser $f_\lambda$ has to be zero to satisfy $J(f_\lambda)\cap\{0\}\neq\emptyset$. This means $\Omega$ is minimised at 0.
\end{proof}

\subsection{Duality mappings}
The proof of \cref{thm:duality_map_intersection} crucially relies on the following connection of the duality mapping with subgradients (c.f. \cite{asplund1967,blazek1982}).

\begin{proposition}\label{prop:integral_functional}
  For a normed linear space $V$ with duality mapping $J$ with gauge function $\mu$ define $M:V \rightarrow \R$ by
  \begin{equation}
    M(x) = \int\limits_{0}^{\norm{x}_V} \mu(t)\differential{t}
  \end{equation}
  Then $x^\ast\in\partial M(x)\subset V^\ast$, the subgradient of $M$, if and only if
  $$M(y) \geq M(x) + \dualmap{}{x^\ast}{y-x}$$
  For any $0\neq x \in V$ we have that $\partial M(x) = J(x)$.
\end{proposition}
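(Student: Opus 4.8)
The plan is to exploit the convexity of $M$ together with explicit computations of directional derivatives. First I would observe that $M(x) = \int_0^{\norm{x}_V}\mu(t)\,\mathrm{d}t$ is a composition of the convex increasing function $\varphi(s) = \int_0^s\mu(t)\,\mathrm{d}t$ (convex because $\mu$ is nondecreasing — in fact $\mu$ is strictly increasing by hypothesis) with the norm, hence $M$ is convex and continuous on all of $V$. Therefore the subdifferential characterisation ``$x^\ast\in\partial M(x)$ iff $M(y)\ge M(x)+\dualmap{}{x^\ast}{y-x}$ for all $y$'' is just the definition of the subgradient of a convex function, so the first assertion requires essentially nothing beyond noting convexity. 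The content is in the identification $\partial M(x) = J(x)$ for $x\neq 0$.

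For the inclusion $J(x)\subseteq\partial M(x)$, I would take $x^\ast\in J(x)$, so $\norm{x^\ast}=\mu(\norm{x})$ and $\dualmap{}{x^\ast}{x}=\norm{x^\ast}\norm{x}=\mu(\norm{x})\norm{x}$. For arbitrary $y$ I need $M(y)-M(x)\ge\dualmap{}{x^\ast}{y-x}=\dualmap{}{x^\ast}{y}-\mu(\norm{x})\norm{x}$. Since $\dualmap{}{x^\ast}{y}\le\norm{x^\ast}\norm{y}=\mu(\norm{x})\norm{y}$, it suffices to show $M(y)-M(x)\ge\mu(\norm{x})(\norm{y}-\norm{x})$, i.e.\ $\varphi(\norm{y})-\varphi(\norm{x})\ge\varphi'(\norm{x})(\norm{y}-\norm{x})$, which is exactly the subgradient inequality for the convex real function $\varphi$ at the point $\norm{x}$ (using that $\varphi$ is differentiable with $\varphi'=\mu$). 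For the reverse inclusion $\partial M(x)\subseteq J(x)$, I would take $x^\ast\in\partial M(x)$ and extract two pieces of information from the subgradient inequality. Plugging $y=tx$ for $t>0$ gives $\varphi(t\norm{x})-\varphi(\norm{x})\ge (t-1)\dualmap{}{x^\ast}{x}$; dividing by $t-1$ and letting $t\to 1^\pm$ forces $\dualmap{}{x^\ast}{x}=\mu(\norm{x})\norm{x}$. Plugging $y=x+v$ for arbitrary $v$ and using $M(x+v)\le\varphi(\norm{x}+\norm{v})$ gives, after the same one-sided limit along $v$ replaced by $sv$, the bound $\dualmap{}{x^\ast}{v}\le\mu(\norm{x})\norm{v}$ for all $v$, hence $\norm{x^\ast}\le\mu(\norm{x})$; combined with $\dualmap{}{x^\ast}{x}=\mu(\norm{x})\norm{x}\le\norm{x^\ast}\norm{x}$ this yields $\norm{x^\ast}=\mu(\norm{x})$ and $\dualmap{}{x^\ast}{x}=\norm{x^\ast}\norm{x}$, i.e.\ $x^\ast\in J(x)$.

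The main obstacle, such as it is, is purely the bookkeeping of the one-sided difference quotients: one must be careful that $\varphi$ is genuinely differentiable (it is, since $\mu$ is continuous, so $\varphi'=\mu$ by the fundamental theorem of calculus) and that the one-sided limits of the difference quotients of the convex function $t\mapsto M(x+tv)$ at $t=0$ sandwich $\dualmap{}{x^\ast}{v}$ from the correct side. No appeal to reflexivity or to Hahn–Banach is needed here; the proposition is a soft convex-analysis fact, and the genuinely hard analytic input (reflexivity, the Beurling–Livingston-type intersection property) enters only later when this proposition is fed into the proof of \cref{thm:duality_map_intersection}.
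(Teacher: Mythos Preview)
Your argument is correct and is the standard convex-analysis proof of this fact. The paper, however, does not actually prove \cref{prop:integral_functional}; it merely states it with a citation to Asplund~\cite{asplund1967} and Bla\u{z}ek~\cite{blazek1982} and then uses it as input for the proof of \cref{thm:duality_map_intersection}. So there is nothing to compare against: you have supplied a proof where the paper supplies a reference, and your proof is exactly the one those references contain.
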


We now give a proof of \cref{thm:duality_map_intersection} which follows the ideas of the one presented in~\cite{blazek1982} but corrects the mistake from that paper.\\

\begin{proof}[Of \Cref{thm:duality_map_intersection}]
  Using the functional $M$ from \cref{prop:integral_functional} define a functional $F\,\colon V\rightarrow\R$ by
  $$F(x) = M(x-x_0) - L_0(x-x_0)$$
  Since $M$ is continuous, convex with strictly increasing derivative and $L_0$ is linear, $F$ is clearly continuous, convex and coercive. This means that $F$ attains its minimum on the reflexive subspace $W$ in at least one point, $\overline{z}$ say.\\
  Hence for all $y\in W$
  \begin{gather}
    F(y) - F(\overline{z}) \geq 0 \notag\\
    \Leftrightarrow M(y - x_0) \geq M(\overline{z} - x_0) + L_0(y - \overline{z}) \notag\\
    \Leftrightarrow M(y - x_0) - M(\overline{z} - x_0) + L_0(\overline{z} - x_0) \geq L_0(y - x_0)\label{eq:duality_inequality}
  \end{gather}
  By \cref{prop:integral_functional} this means that $L_0\big|_W \in \partial M\big|_W(\overline{z} - x_0) = J_\mu\big|_W(\overline{z} - x_0)$. For simplicity we write $L_0\big|_W = L_W$.\\
  Note that if $x_0\in W$ and $L_W = 0$ we have that $F(x) = M(x-x_0)$ on $W$ so $\overline{z} = x_0$ and we trivially have $J_\mu(x_0-x_0) = \{0\} = \{-L_0 + L_0\} \subset W^\perp + L_0$. So we can without loss of generality assume that not both $x_0\in W$ and $L_W=0$. \\
  \ \\
  In case $x_0\in W$ it is clear that $M$ is minimised at $x_0$. If $L_W \neq 0$ then $L_W$ attains its norm on $W$ in a point $z$ say. Thus it is clear that there exists a minimiser for $F$ of the form $\overline{z} = z + x_0$. More precisely $F$ is minimised where an element of $\partial M$ and $\nabla L_0$ are equal. Since $\partial M(x-x_0)=\mu(\norm{x-x_0})\frac{L_x}{\mu(\norm{x-x_0})}$ for $L_x\in J_\mu(x-x_0)$ we get that the minimiser $\overline{z}=z+x_0$ is such that $\norm{L_W}_{W^\ast}=\mu(\norm{\overline{z}-x_0})$.\\
  \ \\
  If on the other hand $x_0\not\in W$ then we note that $\overline{z}$ being the minimum for $F$ on $W$ implies that $L_z(y) \geq 0$ for all $L_z \in \partial F(\overline{z})$ and all $y\in W$. But this means that
  $$\mu(\norm{\overline{z}-x_0})\frac{L_z(y)}{\mu(\norm{\overline{z}-x_0})} - L_0(y) \geq 0$$
  for every $L_z\in J(\overline{z}-x_0)$. But since $\frac{L_z}{\mu(\norm{\overline{z}-x_0})}$ is of norm 1 this means that
  $$\mu(\norm{\overline{z}-x_0})\cdot\norm{y} \geq \mu(\norm{\overline{z}-x_0})\frac{L_z(y)}{\mu(\norm{\overline{z}-x_0})} \geq L_W(y)$$
  for all $y\in W$. Thus $\norm{L_W}_{W^\ast}=\norm{L_0\big|_W}_{W^\ast}\leq\mu(\norm{\overline{z}-x_0})$.\\
  Now denote by $\overline{W}$ the space generated by $W$ and $x_0$ and note that this space is still reflexive. Extend $L_W$ to $L_{\overline{W}}$ on $\overline{W}$ by setting
  $$L_{\overline{W}}(x_0) = L_0(\overline{z}) - \mu(\norm{\overline{z}-x_0})\cdot\norm{\overline{z}-x_0}$$
  Then
  \begin{align*}
    L_{\overline{W}}(\overline{z}-x_0) & = L_W(\overline{z}) - \left(L_0(\overline{z}) - \mu(\norm{\overline{z}-x_0})\cdot\norm{\overline{z}-x_0}\right)\\
    & = \mu(\norm{\overline{z}-x_0})\cdot\norm{\overline{z}-x_0}
  \end{align*}
  so $\norm{L_{\overline{W}}}_{\overline{W}^\ast} \geq \mu(\norm{\overline{z}-x_0})$.\\
  Further $L_{\overline{W}}(y)=L_W(y)\leq\mu(\norm{\overline{z}-x_0})\cdot\norm{y}$ for all $y\in W$, so $\norm{L_{\overline{W}}} > \mu(\norm{\overline{z}-x_0})$ can only happen if the norm is attained for some point $\lambda y+\nu x_0$ for $y\in W$, $\nu\neq0$. Or equivalently, dividing through by $\nu$, at a point $y+x_0$ for some $y\in W$. But for those points we have
  \begin{align*}
    L_{\overline{W}}(y+x_0) & = L_W(y) + L_0(\overline{z}) - \mu(\norm{\overline{z}-x_0})\cdot\norm{\overline{z}-x_0}\\
                                         & \leq \mu(\norm{\overline{z}-x_0})\cdot\norm{y+\overline{z}} - \mu(\norm{\overline{z}-x_0})\cdot\norm{\overline{z}-x_0}\\
                                         & \leq \mu(\norm{\overline{z}-x_0})\left|\norm{y+\overline{z}} - \norm{\overline{z}-x_0}\right|\\
    & \leq \mu(\norm{\overline{z}-x_0})\cdot\norm{y+x_0}
  \end{align*}
  and thus $\norm{L_{\overline{W}}} = \mu(\norm{\overline{z}-x_0})$ and $L_{\overline{W}}(\overline{z}-x_0) = \norm{L_{\overline{W}}}\cdot\norm{\overline{z}-x_0}$.\\
  \ \\
  Since for $x_0\in W$ we have $\overline{W}=W$ in either case we have obtained a function $L_{\overline{W}}$ such that $L_{\overline{W}}=L_0\big|_W$, $\norm{L_{\overline{W}}}=\mu(\norm{\overline{z}-x_0})$ and $L_{\overline{W}}(\overline{z}-x_0) = \norm{L_{\overline{W}}}\cdot\norm{\overline{z}-x_0}$.\\
  Now extend $L_{\overline{W}}$ by Hahn-Banach to $L_V$ on $V$ such that
  $$\norm{L_V}=\norm{L_{\overline{W}}}=\mu(\norm{\overline{z}-x_0})$$
  and $L_V\big|_{\overline{W}}=L_{\overline{W}}$. Hence $(L_V - L_0)\big|_W = 0$ so $L_V\in W^\perp + L_0$.\\
  It remains to show that $L_V\in J_\mu(\overline{z}-x_0)$ by showing \cref{eq:duality_inequality} holds for $L_V$ and every $y\in V$. Notice first that
  \begin{equation}
    \label{eq:u_star_estimate}
    L_V(y-x_0) \leq \norm{L_V}\cdot\norm{y-x_0} = \norm{L_{\overline{W}}}\cdot\norm{y-x_0} = L_{\overline{W}}\left(\frac{\norm{y-x_0}}{\norm{\overline{z}-x_0}}(\overline{z}-x_0)\right)
  \end{equation}
  But
  \begin{multline}
    \label{eq:v_star_estimate}
    L_{\overline{W}}\left(\frac{\norm{y-x_0}}{\norm{\overline{z}-x_0}}(\overline{z}-x_0)\right) - L_{\overline{W}}(\overline{z}-x_0) = \left(\frac{\norm{y-x_0}}{\norm{\overline{z}-x_0}} - 1\right)\mu(\norm{\overline{z}-x_0})\norm{\overline{z}-x_0}\\
    = \mu(\norm{\overline{z}-x_0})\left(\norm{y-x_0} - \norm{\overline{z}-x_0}\right)
  \end{multline}
  and further
  \begin{equation}
    \label{eq:M_estimate}
    M(y-x_0) - M(\overline{z}-x_0) = \int\limits_{\norm{\overline{z}-x_0}}^{\norm{y-x_0}} \mu(t)\differential{t} \geq \left(\norm{y-x_0} - \norm{\overline{z}-x_0}\right)\mu(\norm{\overline{z}-x_0})
  \end{equation}
  so the left hand side of \cref{eq:M_estimate} is always at least as big as the left hand side of \cref{eq:v_star_estimate}. We can thus add the left hand side of \cref{eq:v_star_estimate} to the right hand side of \cref{eq:duality_inequality} and the left hand side of \cref{eq:M_estimate} to the left hand side of \cref{eq:duality_inequality} while preserving the inequality. \Cref{eq:duality_inequality} is in particular true for $\overline{z}$ and in that case also for $L_{\overline{W}}$ as it agrees with $L_0$ on $\overline{z}$ and $x_0$, i.e.
  $$M(\overline{z} - x_0) - M(\overline{z} - x_0) + L_{\overline{W}}(\overline{z} - x_0) \geq L_{\overline{W}}(\overline{z} - x_0)$$
  Thus by adding the left hand sides of \cref{eq:v_star_estimate} and \cref{eq:M_estimate} as described we obtain
  $$M(y-x_0) - M(\overline{z}-x_0) + L_{\overline{W}}(\overline{z}-x_0) \geq L_{\overline{W}}\left(\frac{\norm{y-x_0}}{\norm{\overline{z}-x_0}}(\overline{z}-x_0)\right)$$
  for all $y\in V$. But since $L_V$ also agrees with $L_{\overline{W}}$ on $\overline{z}$ and $x_0$ this together with \cref{eq:u_star_estimate} implies that
  $$M(y - x_0) - M(\overline{z} - x_0) + L_V(\overline{z} - x_0) \geq L_V(y - x_0)$$
  for all $y\in V$ which is what we wanted to prove. Thus indeed $L_V\in J_\mu(\overline{z}-x_0)$ as claimed. By homogeneity of $J_\mu$ clearly $-L_V$ with $-\overline{z}\in W$ is as in the statement of the theorem.
\end{proof}\end{appendix}

\end{document}